\newcommand{\Ya}{\mbox{\usefont{T2A}{\rmdefault}{m}{n}\CYRYA}}
\newcommand{\CC}{\mathcal{C}}
\newtheorem{thm}{Theorem}[section]
\newtheorem{lem}[thm]{Lemma}
\newtheorem{prop}[thm]{Proposition}
\newtheorem{cor}[thm]{Corollary}
\newtheorem{obs}[thm]{Observation}
\newtheorem{rem}[thm]{Remark}
\newtheorem{q}{Question}
\numberwithin{equation}{section}
\begin{document}
 
\thispagestyle{plain}

\title{On small Mixed Pattern Ramsey numbers}

\author{M. Bartlett}
\address{Marcus Bartlett  \texttt{mbartlett2@student.clayton.edu}}

\author{E. Krop}
\address{Elliot Krop  \texttt{ElliotKrop@clayton.edu}}

\author{T. Nguyen}
\address{Thuhong Nguyen \texttt{tnguyen37@student.clayton.edu}}

\author{M. Ngo}
\address{Michael Ngo  \texttt{mngo4@student.clayton.edu}}

\author{P. President}
\address{Petra President  \texttt{pmaddox2@student.clayton.edu}}
\address{Department of Mathematics \\
Clayton State University \\
Morrow, GA 30260, USA}

\begin{abstract}
We call the minimum order of any complete graph so that for any coloring of the edges by $k$ colors it is impossible to avoid a monochromatic or rainbow triangle, a Mixed Ramsey number. For any graph $H$ with edges colored from the above set of $k$ colors, if we consider the condition of excluding $H$ in the above definition, we produce a \emph{Mixed Pattern Ramsey number}, denoted $M_k(H)$. We determine this function in terms of $k$ for all colored $4$-cycles and all colored $4$-cliques. We also find bounds for $M_k(H)$ when $H$ is a monochromatic odd cycles, or a star for sufficiently large $k$. We state several open questions.
\\[\baselineskip] 
	2010 Mathematics Subject Classification: 05C38, 05C55, 05D10
\\[\baselineskip]
	Keywords: Ramsey Theory, Ramsey Numbers, Gallai-Ramsey Numbers, Pattern Ramsey Numbers, Mixed Ramsey Numbers
\end{abstract}

\date{\today}

\maketitle

\section{Introduction} 
 
 \subsection{Colorful Patterns}

We study edge-colorings of complete graphs that avoid certain color patterns. In general, we consider colorings that avoid fixed monochromatic cliques, fixed rainbow cliques, and a multicolor pattern or family of multicolor patterns. For a given family of edge-colored graphs $\mathcal{F}$ and integers $n,p,q$, and $k$, define a $(K_p,\mathcal{F},K_q;k)$-coloring of $K_n$ to be an edge-coloring with $k$ colors, avoiding monochromatic $K_p$, rainbow $K_q$, and every member of $\mathcal{F}$. If the number of colors used is not specified and the family of colored graphs $\mathcal{F}$ is restricted to lexical graphs--the set of complete graphs of fixed order and ranked vertices, where two edges have the same color if and only if they have the same higher endpoint--then such colorings are the subject of the Erd\H os-Rado Canonical Ramsey Theorem \cite{ER}. The best current bounds on $er(p)$, the least order of complete graphs that must exhibit a monochromatic, rainbow, or lexical $K_p$, are due to Leffmann and R\"odl \cite{LR}, who showed that there exist constants $c,c'$ such that for any positive integer $p$,
\[2^{cp^2}\leq er(p)\leq 2^{c'p^2\log p}.\] 

An edge-coloring of $K_n$ is called \emph{Gallai} if no triangle is colored with three distinct colors. Let $f(s,t;k)$ be the largest $n$ so that there exists a $k$-coloring of the edges of $K_n$ where every $K_s\subseteq K_n$ has exactly $t$ different colors. For Gallai colorings avoiding monochromatic triangles, which we call the \emph{pure mixed case}, Chung and Graham \cite{CG} showed
\begin{align}
f(3,2;k)= \left\{     \begin{array}{lr}       5^{k/2} & \text{if }k\text{ is even}\\
       2\times 5^{(k-1)/2} & \text{if }k\text{ is odd}     \end{array}   \right.\label{mixed}
\end{align}
Such investigations have been generalized in various directions. For example, \emph{mixed Ramsey numbers} were studied by Axenovich and Iverson \cite{AI} and V.~Jungi\'c, T.~Kaiser, D.~Kral \cite{JKK}. In this case the problem is to determine the maximum or minimum number of colors in an edge-coloring of $K_n$, for a fixed $n$ and number of colors $k$, such that no monochromatic subgraph appears isomorphic to some graph $G$ and no rainbow subgraph appears isomorphic to some graph $H$.

Various authors, \cite{FGJM}, \cite{FM}, studied Gallai colorings excluding monochromatic cycles and paths, with the most recent paper by Hall, Magnant, Ozeki, and Tsugaki \cite{HMOT}.

In the above Ramsey problems, other than in the mixed case of monochromatic and rainbow triangles, gaps persist between upper and lower bounds. Restricting the situation to monochromatic and rainbow triangles, as well as to some predetermined pattern, we call the resulting Ramsey numbers \emph{mixed pattern Ramsey numbers}. We attempt to take the recent results of \cite{GS} and \cite{HMOT} on Gallai colorings, and apply them to the more understood small mixed cases, which were studied more generally in \cite{AJ} and \cite{AI}, to which we add particular pattern exclusions. In this limited context we are able to find many sharp bounds. However, new questions arise as to the order of the Ramsey function when forbidding various color patterns which were not previously considered, either because they could always be avoided, as in the pure Gallai case, or because questions of lexical colorings dominated.

For more on rainbow generalizations of Ramsey theory, see \cite{FMO} and the updated version \cite{FMO2}.

In this paper, we consider the problem of $(K_3,F,K_3;k)$-colorings, where $F$ is a colored cycle. In section two, we determine this function for monochromatic and bichromatic colorings of four-cycles. This problem was solved for $k=2$ by Chartrand, Kolasinski, Fujie-Okamoto, and Zhang \cite{CKFZ}. In section three, we consider the case of colorings of four-cycles by at least three colors. We rely on local arguments and elementary techniques in the previous two sections and apply the Gallai structure theorem in those that follow. In section four, we find exact bounds on mixed pattern Ramsey numbers of monochromatic odd cycles, as well as the value of the function excluding monochromatic stars, when the number of colors used is large enough. In section five we find the mixed pattern Ramsey numbers for all colorings of $K_4$. In section six, we state several open questions.

 \subsection{Definitions and Notation}
 For basic graph theoretic notation and definition see Diestel \cite{Diest}. All graphs $G$ are undirected with the vertex set $V$ and edge set $E$. 
$K_n$ denotes the complete graph on $n$ vertices. For any edge $(u,v)$, let $\CC(u,v)$ be the color on that edge, for any vertex $v$, let $\CC(v)$ be the set of colors on the edges incident to $v$, and for any edge-colored graph $H$, let $\CC(H)$ be the set of colors on the edges of $H$. We write $c(v)=\left|\CC(v)\right|$.

For any color $i$ and vertex $v$, we let $N_i(v)$ denote the set of vertices adjacent to $v$ by edges colored $i$.

For any subset of vertices $S\subseteq V$, in a colored graph $G$, we denote $[S]$ to mean the induced subgraph on $S$ with the coloring from $G$.

For any two graphs $A$ and $B$, the join of $A$ and $B$, written $A\vee B$, is the graph formed by $A$, $B$, and edges between every pair of vertices $u,v$ so that $u\in A$ and $v\in B$.

We define $C_n$ to be the monochromatic n-cycle for any integer $n>2$. In keeping with notation from \cite{CKFZ}, we define $\pi_1,\pi_2,$ and $\pi_3$ as the two-colored four-cycles in the figures below.

\begin{center}
\begin{tikzpicture}[>=stealth]
\tikzstyle{vertex}=[circle,minimum size=5pt,inner sep=0pt]
\tikzstyle{edge} = [draw,thick,-,black]
\tikzstyle{selected edge} = [draw,thick, -,red!50]
\tikzstyle{dash} = [draw,thick,-,style=dashed,blue]
\node[vertex] (v1) at (0,0) [label= left:$v_1$][circle,draw=black!20,fill=black!]{};
\node[vertex] (v2) at (0,1)[label=left:$v_2$][circle,draw=black!20,fill=black!]{};
\node[vertex] (v3) at (1,1)[label=right:$v_3$][circle,draw=black!20,fill=black!]{};
\node[vertex] (v4) at (1,0)[label=right:$v_4$][circle,draw=black!20,fill=black!]{};	
\draw[edge] (v1) -- (v2) -- (v3)  -- (v4);
\draw[dash] (v4) -- (v1);

\node at (.5,-.5) {\Large{$\pi_1$}};

\node[vertex] (v5) at (3,0) [label= left:$v_1$][circle,draw=black!20,fill=black!]{};
\node[vertex] (v6) at (3,1)[label=left:$v_2$][circle,draw=black!20,fill=black!]{};
\node[vertex] (v7) at (4,1)[label=right:$v_3$][circle,draw=black!20,fill=black!]{};
\node[vertex] (v8) at (4,0)[label=right:$v_4$][circle,draw=black!20,fill=black!]{};	
\draw[edge] (v5) -- (v6) -- (v7) ;
\draw[dash] (v7) -- (v8) -- (v5);

\node at (3.5,-.5) {\Large{$\pi_2$}};

\node[vertex] (u1) at (6,0) [label= left:$v_1$][circle,draw=black!20,fill=black!]{};
\node[vertex] (u2) at (6,1)[label=left:$v_2$][circle,draw=black!20,fill=black!]{};
\node[vertex] (u3) at (7,1)[label=right:$v_3$][circle,draw=black!20,fill=black!]{};
\node[vertex] (u4) at (7,0)[label=right:$v_4$][circle,draw=black!20,fill=black!]{};	
\draw[edge] (u1) -- (u2)  (u3) -- (u4) ;
\draw[dash] (u2) -- (u3)  (u4) -- (u1);

\node at (6.5,-.5) {\Large{$\pi_3$}};

\end{tikzpicture}
\end{center}

We define the rest of the multicolored four-cycles as $\pi_4, \pi_5,$ and $\pi_6$ below.

\begin{center}
\begin{tikzpicture}[>=stealth]
\tikzstyle{vertex}=[circle,minimum size=5pt,inner sep=0pt]
\tikzstyle{edge} = [draw,thick,-,black]
\tikzstyle{selected edge} = [draw,very thick, -,red!50]
\tikzstyle{dash} = [draw,thick,-,style=dashed,blue]
\tikzstyle{gray edge} = [draw,ultra thick,-,gray]

\node[vertex] (v1) at (0,0) [label= left:$v_1$][circle,draw=black!20,fill=black!]{};
\node[vertex] (v2) at (0,1)[label=left:$v_2$][circle,draw=black!20,fill=black!]{};
\node[vertex] (v3) at (1,1)[label=right:$v_3$][circle,draw=black!20,fill=black!]{};
\node[vertex] (v4) at (1,0)[label=right:$v_4$][circle,draw=black!20,fill=black!]{};	
\draw[edge] (v1) -- (v2) -- (v3);
\draw[dash] (v3) -- (v4);
\draw[selected edge] (v4) -- (v1);

\node at (.15,.5) {\tiny{$1$}};
\node at (.5,.85) {\tiny{$1$}};
\node at (.85,.5) {\tiny{$2$}};
\node at (.5,.15) {\tiny{$3$}};

\node at (.5,-.5) {\Large{$\pi_4$}};

\node[vertex] (v5) at (3,0) [label= left:$v_1$][circle,draw=black!20,fill=black!]{};
\node[vertex] (v6) at (3,1)[label=left:$v_2$][circle,draw=black!20,fill=black!]{};
\node[vertex] (v7) at (4,1)[label=right:$v_3$][circle,draw=black!20,fill=black!]{};
\node[vertex] (v8) at (4,0)[label=right:$v_4$][circle,draw=black!20,fill=black!]{};	
\draw[edge] (v5) -- (v6) (v7) -- (v8) ;
\draw[dash] (v6) -- (v7);
\draw[selected edge] (v8) -- (v5);

\node at (3.15,.5) {\tiny{$1$}};
\node at (3.5,.85) {\tiny{$2$}};
\node at (3.85,.5) {\tiny{$1$}};
\node at (3.5,.15) {\tiny{$3$}};

\node at (3.5,-.5) {\Large{$\pi_5$}};

\node[vertex] (u1) at (6,0) [label= left:$v_1$][circle,draw=black!20,fill=black!]{};
\node[vertex] (u2) at (6,1)[label=left:$v_2$][circle,draw=black!20,fill=black!]{};
\node[vertex] (u3) at (7,1)[label=right:$v_3$][circle,draw=black!20,fill=black!]{};
\node[vertex] (u4) at (7,0)[label=right:$v_4$][circle,draw=black!20,fill=black!]{};	
\draw[edge] (u1) -- (u2);
\draw[selected edge]  (u3) -- (u4);
\draw[dash] (u2) -- (u3);
\draw[gray edge] (u4) -- (u1);

\node at (6.15,.5) {\tiny{$1$}};
\node at (6.5,.85) {\tiny{$2$}};
\node at (6.85,.5) {\tiny{$3$}};
\node at (6.5,.15) {\tiny{$4$}};

\node at (6.5,-.5) {\Large{$\pi_6$}};

\end{tikzpicture}
\end{center}

 For any $k\geq j$ and $j$-colored graph $H$ of order at most $n$, we say that an edge-coloring of $K_n$ is $(K_3,H,\varphi_3;k)$ if there are $k$ colors on the edges of $K_n$ and $K_n$ does \emph{not} contain a monochromatic $K_3$, $H$, or rainbow $K_3$, which we denote by $\varphi_3$. For any positive integer $n$, an edge-coloring of $K_n$ is $(K_3,H,\varphi_3)$ if it is as above without the specification of the number of colors used. For any positive integers $n,k$ we say that $K_n$ is $(K_3,H,\varphi_3;k)$-colorable if there exists an edge coloring of $K_n$ that is $(K_3,H,\varphi_3;k)$. 

We say an edge-coloring of $K_n$ is $(K_3,\varphi_3)$, or \emph{mixed}, if $K_n$ does not contain a monochromatic or rainbow triangle. For a specified number of colors, $k$, we extend the previous definition as before. Furthermore, $R(K_3,\varphi_3;k)$ is the minimum $n$ so that every $k$-coloring of $K_n$ produces either a monochromatic or rainbow triangle as a subgraph and there exists a $(K_3,\varphi_3;k)$ coloring of $K_{n-1}$. We define $M_k(H)=R(K_3,H,\varphi_3;k)$ to be the minimum order of a complete graph such that every $k$-coloring produces either $K_3$, $H$, or $\varphi_3$ as a subgraph and there exists a $(K_3, H,\varphi_3;k)$ coloring of $K_{n-1}$. The \emph{Gallai-Ramsey number}, $GR_k(H)$, is the minimum order of a complete graph so that every edge coloring with $k$ colors and no rainbow triangles produces $H$ as a subgraph.

Let $\Ya(K_3,\varphi_3;k)$ be the maximum order of a complete graph such that every $k$-coloring produces either $K_3$ or $\varphi_3$ as a subgraph and there exists a $(K_3,\varphi_3;k)$ coloring of $K_{n+1}$. This function has been studied in a slightly different formulation in \cite{AI} and \cite{JKK}, and many asymptotic bounds are known.

For a fixed number of colors $k$, it may help to keep in mind the heuristic illustration of the relationship between values of $n$ and mixed colorings.

\begin{center}
\begin{tikzpicture}[>=stealth]
\tikzstyle{vertex}=[circle,minimum size=5pt,inner sep=0pt]
\tikzstyle{edge} = [draw,thick,-,black]
\tikzstyle{selected edge} = [draw,very thick, -,red!50]
\tikzstyle{dash} = [draw,thick,-,style=dashed,blue]
\tikzstyle{gray edge} = [draw,ultra thick,-,gray]

\node[vertex] (v1) at (0,0) [][circle,draw=black!20,fill=black!]{};
\node (v4) at (12,0)[][circle]{};

\draw [->] (0,0) -- (12,0) node [below] {$n$};

\node at (4,0){$][$};
\node at (8,0){$][$};


\node at (2,-.35){forced $K_3$ or $\varphi_3$};
\node at (6,-.35){can avoid $K_3$ or $\varphi_3$};
\node at (10,-.35){forced $K_3$ or $\varphi_3$};

\node at (4,.5){$\Ya(K_3,\varphi_3;k)$};
\node at (8,.5){$R(K_3,\varphi_3;k)$};

\end{tikzpicture}
\end{center}

For certain colored patterns $H$ and number of colors $k$, $K_n$ does not admit a $(K_3,H,\varphi_3;k)$ edge-coloring. In this case, it is convenient to define the minimum order complete graph with at least $k$ edges, $MIN(k)$. In particular, define $|MIN(k)|=min(k)=\left\lceil \sqrt{2k+\frac{1}{4}}+\frac{1}{2}\right\rceil$.

\subsection{Some Useful Known Results}

With the above notation, we can restate (\ref{mixed}) as
\begin{thm}\label{mixed2}
\begin{align}
R(K_3,\varphi_3;k)= \left\{     \begin{array}{ll}       5^{k/2}+1, & \text{if }k\text{ is even}\\
       2\times 5^{(k-1)/2}+1, & \text{if }k\text{ is odd}     \end{array}   \right.
\end{align}
\end{thm}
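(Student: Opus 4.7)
The plan is to recognize that Theorem \ref{mixed2} is simply a translation of the Chung--Graham formula (\ref{mixed}) into the $R(K_3,\varphi_3;k)$ notation introduced in the Definitions subsection. So rather than re-proving the Chung--Graham bound, I would bridge the two formalisms and then invoke (\ref{mixed}) directly.

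The key observation is that for a $k$-edge-coloring of $K_n$, a triangle $K_3$ uses exactly $2$ colors if and only if it is neither monochromatic (which would use $1$ color) nor rainbow (which would use $3$ colors). Consequently, a $k$-coloring of $K_n$ satisfies the condition defining $f(3,2;k)$ --- that every $K_3$ has exactly $2$ colors --- if and only if it is a $(K_3,\varphi_3;k)$-coloring in the sense of this paper. Therefore the set of integers $n$ admitting a $(K_3,\varphi_3;k)$-coloring of $K_n$ is exactly $\{2,3,\dots,f(3,2;k)\}$.

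Next I would appeal to the definition of $R(K_3,\varphi_3;k)$ as the minimum $n$ such that every $k$-coloring of $K_n$ contains a monochromatic or rainbow triangle, while $K_{n-1}$ still admits such a coloring. Monotonicity (any $(K_3,\varphi_3;k)$-coloring of $K_n$ restricts to one of $K_{n-1}$, by deleting any vertex and preserving all $k$ colors --- a mild point to check when $k$ is close to $\binom{n}{2}$, but harmless for the values of $n$ involved here) shows the threshold is well-defined, and combining with the previous paragraph yields the identity
\[
R(K_3,\varphi_3;k) \;=\; f(3,2;k)+1.
\]
Substituting the two cases of (\ref{mixed}) for $f(3,2;k)$ gives the two cases of Theorem \ref{mixed2} verbatim.

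The only subtlety --- the nearest thing to an obstacle --- is the notational check that $f(3,2;k)$ as defined in the excerpt (a coloring where \emph{every} $K_s$ has exactly $t$ colors) coincides, for $(s,t)=(3,2)$, with the ``no monochromatic and no rainbow triangle'' coloring, and that the $k$ colors are all actually used in the extremal construction (otherwise the number-of-colors index would shift). Both points are immediate: the former from the binary ``$1$ or $3$ colors means bad'' dichotomy, the latter from the standard Chung--Graham blow-up construction, which uses all $k$ colors by design. With these remarks in place the proof is complete.
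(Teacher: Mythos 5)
Your proposal is correct and matches the paper's treatment: the paper offers no independent proof of Theorem \ref{mixed2}, presenting it purely as a restatement of the Chung--Graham formula (\ref{mixed}) in the $R(K_3,\varphi_3;k)$ notation, which is exactly the translation you carry out. Your extra care about the ``exactly two colors per triangle'' equivalence and the $R = f(3,2;k)+1$ shift is a welcome bit of rigor that the paper leaves implicit.
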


\begin{lem}\cite{AJ} \label{AJ}
Let $c$ be a coloring of $E(K_n)$ with no rainbow triangle. Then there is a vertex with at least $\frac{(n+1)}{3}$ edges incident to it of the same color.
\end{lem}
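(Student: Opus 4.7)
The natural framework for this lemma is Gallai's structure theorem for rainbow-triangle-free colorings, which yields a partition $V_1, \ldots, V_m$ with $m \geq 2$ of the vertex set of $K_n$ such that between any two blocks all edges carry a single color, and only two colors---call them red and blue---appear on inter-block edges in total. I would split the argument into a large-block case and a small-block case, and then close a residual integrality gap.

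First, I would dispose of the large-block case: if some $|V_i| \geq (n+1)/3$, then any vertex $u \notin V_i$ sends all of its edges into $V_i$ in a single color, so it has monochromatic degree at least $|V_i| \geq (n+1)/3$. In particular this settles $m = 2$, where the larger block already has size at least $n/2 \geq (n+1)/3$. From this point on I may assume $m \geq 3$ and every block has size strictly less than $(n+1)/3$.

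Next, for any block $V_i$ and any $u \in V_i$, the $n - |V_i|$ inter-block edges at $u$ split between red and blue, so by pigeonhole $u$ has monochromatic degree at least $(n - |V_i|)/2$. This already exceeds $(n+1)/3$ whenever $|V_i| \leq (n-2)/3$, which finishes the proof unless every block has size strictly between $(n-2)/3$ and $(n+1)/3$---an open window of length one.

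This residual integrality gap is the only delicate point, and I expect it to be the main obstacle. A short case analysis on $n \bmod 3$ closes it. When $n = 3k+2$ the window $(k,k+1)$ contains no integer, an immediate contradiction. When $n = 3k$ the only integer in the window is $k$, forcing $m = 3$ with three blocks of size $k$; pigeonhole on the two-colored reduced $K_3$ produces a vertex of monochromatic degree $2$, which lifts to monochromatic degree $2k \geq (n+1)/3$ in $K_n$. When $n = 3k+1$ every block must again have size $k$, so $mk = 3k+1$ forces $k=1$, $m=4$, $n=4$, and a two-colored $K_4$ always contains a vertex of monochromatic degree at least $2 \geq 5/3$. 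The difficulty is exactly this integrality check: the real-valued averaging misses the target by a fraction, and recovering it requires exploiting the forced arithmetic of the Gallai partition.
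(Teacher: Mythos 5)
The paper does not prove this lemma; it is quoted verbatim from Axenovich and Jamison \cite{AJ}, so there is no internal proof to compare against. Your argument is a correct, self-contained derivation from the Gallai structure theorem (Theorem \ref{G} in the paper), and it is essentially the standard route to this bound. The two main cases are sound: a block of size at least $\frac{n+1}{3}$ hands any outside vertex that many edges of one color, and a vertex in a block of size at most $\frac{n-2}{3}$ gets monochromatic degree at least $\frac{1}{2}\left(n-\frac{n-2}{3}\right)=\frac{n+1}{3}$ from the two-colored inter-block edges. Your residual analysis also checks out: for $n=3k+2$ the open window $\left(\frac{n-2}{3},\frac{n+1}{3}\right)$ contains no integer; for $n=3k$ all blocks have size $k$, forcing $m=3$ and a reduced two-colored triangle whose majority-color vertex lifts to monochromatic degree $2k\geq\frac{n+1}{3}$; and for $n=3k+1$ the divisibility constraint $k\mid 3k+1$ forces $k=1$, $n=4$, where a two-colored $K_4$ trivially has a vertex with two edges of one color. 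The only cosmetic caveat is that the statement is vacuous or needs $n\geq 2$ for Gallai's theorem to apply, which does not affect any use of the lemma in the paper.
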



\begin{thm}\cite{FGJM}\label{FGJM}
$GR_k(C_4)=k+4$
\end{thm}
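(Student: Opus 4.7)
The plan is to prove both directions $GR_k(C_4) \geq k+4$ and $GR_k(C_4) \leq k+4$ separately.

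For the lower bound, I would construct a Gallai $k$-coloring of $K_{k+3}$ with no monochromatic $C_4$. Start with the standard $2$-coloring of $K_5$ in which each color class is a Hamiltonian $5$-cycle; each pentagon is $C_4$-free and only two colors appear so rainbow triangles are automatically excluded. Extend to $K_{k+3}$ by adjoining new vertices $v_6, \ldots, v_{k+3}$ one at a time, assigning to each new $v_j$ a fresh color $c_j$ placed on all edges from $v_j$ to earlier vertices. The class $c_j$ is a star, which contains no $C_4$; older classes are unchanged. Every triangle containing a new vertex $v_j$ has two edges of color $c_j$ and so is not rainbow, while every triangle avoiding new vertices sits inside the original $K_5$ and uses only the two pentagon colors. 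This gives a valid $k$-coloring of $K_{k+3}$ witnessing $GR_k(C_4) > k+3$.

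For the upper bound I would induct on $k$. The base $k=2$ amounts to $R(C_4, C_4) = 6$, since the Gallai hypothesis is vacuous with two colors. For $k \geq 3$, take any Gallai $k$-coloring of $K_{k+4}$ and apply Gallai's partition theorem to obtain blocks $V_1, \ldots, V_t$ ($t \geq 2$) so that each bipartite graph $V_i \times V_j$ is monochromatic and the induced reduced graph on $t$ nodes uses at most two colors, which I call the major colors. If two blocks both have size $\geq 2$, the monochromatic complete bipartite graph between them contains $K_{2,2} \cong C_4$. Otherwise at most one block, say $V_1$, is non-singleton; if $t \geq 6$ then the $2$-colored reduced $K_t$ contains a monochromatic $C_4$ by $R(C_4, C_4) = 6$, which lifts to a monochromatic $C_4$ in $K_{k+4}$ by choosing one representative from each block involved. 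Hence $t \leq 5$ and $|V_1| \geq k$.

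A key reduction in this last case comes from the singletons: if two singletons $u, u'$ outside $V_1$ send their monochromatic stars into $V_1$ in the same color $c$, then $K_{2, |V_1|}$ in color $c$ contains a $C_4$, since $|V_1| \geq 2$. So the $t-1$ singletons use distinct major colors on their stars into $V_1$, forcing $t \leq 3$ and $|V_1| \geq k+2$. Letting $k'$ be the number of distinct colors used inside $V_1$, if $k' \leq k-2$ then the inductive hypothesis applied to $V_1$ (with $|V_1| \geq k+2 \geq k'+4$) produces the desired $C_4$. The difficult subcase is $k' \in \{k-1, k\}$, where $V_1$ is essentially extremal for the lower bound. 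The critical observation is that any major color also appearing inside $V_1$ must induce only a matching there, for otherwise a length-$2$ path in that color inside $V_1$ combines with the cross star of the singleton to form a monochromatic $C_4$. The main obstacle of the proof is using this rigidity to contradict $V_1$ being an extremal Gallai coloring on $k+3$ vertices; making this precise likely requires showing that in any extremal configuration every color class contains $P_3$, so the matching constraint forces a structural collapse that exposes a monochromatic $C_4$ somewhere.
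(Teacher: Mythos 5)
First, note that the paper does not prove this statement: Theorem~\ref{FGJM} is quoted from \cite{FGJM} without proof, so there is no in-paper argument to compare against; the only overlap is that your lower-bound construction ($K_5$ split into two monochromatic pentagons, then repeatedly adjoining a dominating vertex in a fresh color) is exactly the construction the paper uses for the lower bound of $M_k(C_4)$ via Theorem~\ref{pi5}. That half of your argument, and the reduction in the upper bound (at most one non-singleton block, at most five blocks since $R(C_4,C_4)=6$, distinct major colors on the singletons' stars, hence $t\leq 3$ and $|V_1|\geq k+2$, induction when $k'\leq k-2$), is correct.

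However, the upper bound is not finished, and the missing case is the heart of the matter, not a loose end. When $k'\in\{k-1,k\}$ the block $V_1$ is a Gallai coloring of $K_{k+2}$ or $K_{k+3}$ sitting at or below the extremal threshold, so the inductive hypothesis gives you nothing, and your only additional leverage is that a major color restricted to $V_1$ is a matching. You propose to close this by showing that ``in any extremal configuration every color class contains $P_3$,'' but that is an unproven stability statement about \emph{all} extremal colorings, and it is not obviously true: extremal configurations are not unique (already for $k=2$, $K_5$ decomposes into two pentagons but also into two bulls), a simple edge-count does not force every class to be large, and nothing you have established rules out an extremal Gallai $k$-coloring of $K_{k+3}$ in which some color class is a single edge or a matching. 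Establishing that structural claim is essentially as hard as the theorem itself, so as written the proof has a genuine gap. To repair it you would either need to prove the $P_3$ claim directly (e.g.\ by a second application of Gallai's theorem inside $V_1$ exploiting the vertex $u$ joined to all of $V_1$ in the matching color), or restructure the induction with a stronger hypothesis that tracks which colors are permitted to be matchings.
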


We now state the fundamental structure theorem for Gallai colorings, originally found by T. Gallai in \cite{G} and further explored in \cite{GS} and by various authors.

\begin{thm}\cite{G}\label{G}
Any Gallai-colored complete graph can be partitioned into more than one set of vertices, so that there is only one color on the edges between any pair of parts, and at most two colors on edges between parts.
\end{thm}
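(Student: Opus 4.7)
The plan is to argue by induction on $n$, with the base case $n\leq 2$ handled by the trivial partition into singletons joined by a single edge. For the inductive step, fix a Gallai-coloring of $K_n$ with $n\geq 3$, choose any color $c$ that is used, and let $H_c$ denote the spanning subgraph of $K_n$ consisting of the edges colored $c$. The argument splits according to whether $H_c$ is disconnected.

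Suppose first that $H_c$ is disconnected, with components $C_1,\ldots,C_m$ and $m\geq 2$. The key observation is that all edges between any two components $C_i$ and $C_j$ share a single color, necessarily distinct from $c$. Indeed, for any $v\in C_j$ and any edge $uu'$ of $H_c$ inside $C_i$, the triangle on $\{u,u',v\}$ has two non-$c$ edges, so the non-rainbow hypothesis forces $\CC(uv)=\CC(u'v)$; walking along a $c$-path inside $C_i$ propagates this equality to show that $v$ sends a single color to every vertex of $C_i$, and a parallel walk inside $C_j$ shows that this common color does not depend on $v$. The reduced complete graph $R$ on vertex set $\{C_1,\ldots,C_m\}$ then inherits a Gallai coloring (any rainbow triangle in $R$ lifts to one in $K_n$ by picking representatives) and omits the color $c$. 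Since $c$ contributes at least one edge, some $C_i$ has at least two vertices, whence $m<n$, so the inductive hypothesis applied to $R$ yields a Gallai partition of $R$ into $s\geq 2$ blocks with at most two colors between blocks. Amalgamating the $C_i$ according to these blocks produces the desired partition of $V(K_n)$.

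Suppose instead that $H_c$ is spanning connected for every color $c$ appearing in the coloring. The plan here is to show that at most two colors can actually appear, whereupon the singleton partition of $V(K_n)$ trivially satisfies both conditions. Assume toward contradiction that three colors $c_1,c_2,c_3$ are all used. Pick a $c_1$-edge $uv$ and a shortest $c_2$-path $u=x_0,x_1,\ldots,x_k=v$, which exists because $H_{c_2}$ is spanning connected; the non-rainbow condition applied to each triangle $\{x_i,x_{i+1},v\}$, together with the minimality of the $c_2$-path, inductively forces every edge $x_iv$ with $0\leq i\leq k-2$ to be colored $c_1$. Running the same analysis with $c_3$ in place of $c_2$ produces a second cascade of forced $c_1$-edges, and by choosing $u,v$ so that the two cascades meet at a common vertex, one exposes a triangle whose three edges are forced to receive three distinct colors, contradicting the Gallai hypothesis.

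The main obstacle is this second case. The disconnected-color reduction is essentially bookkeeping once the monochromatic-cross-edge lemma is in hand, but ruling out three-color Gallai colorings in which every color class spans a connected subgraph requires delicate shortest-path chasing, and a fully rigorous execution may be cleaner through Nash--Williams' theorem on edge-disjoint spanning trees combined with a double-counting argument over $P_3$-subgraphs of each color class (for which every such $P_3$ forces its closing edge into a specific non-majority color). This is where the bulk of the technical effort would lie.
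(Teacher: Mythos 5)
The paper does not actually prove this statement; it is quoted verbatim as Gallai's structure theorem from \cite{G} (see also \cite{GS}), so your proposal has to be judged on its own terms. Your first case is correct and is essentially the standard reduction: if some colour class $H_c$ is disconnected, the triangle argument along $c$-paths does show that each pair of components of $H_c$ sees a single colour, the quotient graph is again Gallai-coloured, omits $c$, and has fewer vertices, so induction plus amalgamation finishes. The problem is your second case, which you yourself flag as ``the bulk of the technical effort'': you must show that a Gallai colouring in which \emph{every} colour class is a connected spanning subgraph uses at most two colours. That claim is true, but the only reason I know it is true is that it is a consequence of Gallai's theorem itself (any third colour lives inside the blocks of a Gallai partition and is therefore disconnected as a spanning subgraph), so in this branch of your dichotomy the claim carries the entire content of the theorem and cannot be deferred.

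The specific argument you sketch does not close. The cascade along a shortest $c_2$-path from $u$ to $v$ does force $\CC(x_iv)=c_1$ for $i\le k-2$, but the resulting configuration (the triangle $x_{k-2}x_{k-1}v$ with colours $c_2,c_2,c_1$) is perfectly consistent with the Gallai condition; and when you superimpose the analogous $c_3$-cascade, the two ``tips'' $y$ (with $\CC(yv)=c_2$) and $z$ (with $\CC(zv)=c_3$) only force $\CC(yz)\in\{c_2,c_3\}$. Taking, say, $\CC(yz)=c_2$ and $\CC(x_{k-2}z)=c_1$ leaves every triangle among $v,x_{k-2},y,z$ non-rainbow, so no contradiction is ``exposed'' by making the cascades meet. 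The appeal to Nash--Williams is not a plausible repair either: that theorem produces edge-disjoint spanning trees from a density hypothesis and has no mechanism for detecting rainbow triangles. To complete a proof along your lines you need a genuine independent argument that three connected spanning colour classes force a rainbow triangle; alternatively, follow the route of \cite{GS}, which derives the partition from Gallai's work on transitive orientations and comparability graphs rather than from a connectivity dichotomy.
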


\begin{thm}\cite{GSSS}\label{GSSS}
Let $H$ be a fixed monochromatic graph without isolated vertices.Then $GR_k(H)$ is exponential in $k$ if $H$ is not bipartite and linear in $k$ if $H$ is bipartite and not a star.
\end{thm}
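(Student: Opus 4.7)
The plan is to split along the bipartiteness dichotomy and prove matching lower and upper bounds of the asserted growth in each case.

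\textbf{Non-bipartite case.} For the exponential lower bound I take a 2-coloring of some fixed $K_r$, $r \geq 2$, in which both color classes are bipartite graphs; the decomposition of $K_4$ into two copies of $P_4$ works and gives $r = 4$. A blow-up of a bipartite graph remains bipartite, hence cannot contain the non-bipartite $H$ as a monochromatic subgraph. I iterate by substituting each vertex of the base with a Gallai-Ramsey extremal graph on the next two fresh colors, leaving crossing edges in the original two colors; a direct triangle check shows the result is Gallai, since blob-internal triangles are Gallai by induction and triangles crossing blobs use at most two colors. Iterating $\lfloor k/2 \rfloor$ times yields $GR_k(H) \geq r^{\lfloor k/2 \rfloor}$. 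For the exponential upper bound I would induct on $k$ using Theorem \ref{G}: a Gallai $k$-coloring of $K_n$ admits a Gallai partition whose reduced graph is 2-colored, so either the reduced graph has at least $R(H,H)$ parts (whence a monochromatic $H$ appears in the reduction and lifts back to the original) or one part has size at least $n/(R(H,H)-1)$ on which to recurse. Closing this recursion, while keeping track of which of the $k$ colors are genuinely in play inside the retained part, produces $GR_k(H) \leq C(H)^k$.

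\textbf{Bipartite, non-star case.} Since $H$ has no isolated vertex and is not a star, both sides of its bipartition have size at least two, so $H \subseteq K_{t,t}$ for some $t = t(H)$. For the linear upper bound, apply Theorem \ref{G} inductively. In a Gallai partition of a large $K_n$ the reduced graph uses two colors; any two parts of size at least $t$ joined in a single color would already contain $H$, and the total number of parts is bounded by the bipartite Ramsey number $R(H,H)$, which is polynomial. These constraints leave essentially one dominant part on which each inductive step loses only $O(1)$ vertices, yielding a linear bound. For the linear lower bound I would build a Gallai $k$-coloring on $\Theta(k)$ vertices by nested Gallai substitutions, each step introducing one new color spread over only a bounded number of fresh vertices in a structure too small to contain $H$; the non-star hypothesis is exactly what prevents a single substitution root from already housing $H$ via its incident monochromatic star.

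\textbf{Main obstacle.} The subtlest step is the non-bipartite upper-bound recursion: in a single Gallai reduction the number of colors active in the retained large part need not drop, so obtaining a genuine exponential (rather than merely super-linear) bound requires arguing that colors get effectively consumed by the reduced graph across successive iterations, or supplementing Theorem \ref{G} with a refined structural argument such as Lemma \ref{AJ}. The other three bounds are comparatively transparent once the correct base coloring or the correct embedding of $H$ into $K_{t,t}$ is in hand.
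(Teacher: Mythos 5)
The paper does not actually prove Theorem \ref{GSSS}; it is imported verbatim from \cite{GSSS}, so there is no in-paper argument to measure you against, and I am judging the proposal on its own terms. Your two lower bounds are essentially the standard ones and are sound: the decomposition of $K_4$ into two copies of $P_4$ iterated by Gallai substitution with two fresh colors per level gives roughly $4^{\lfloor k/2\rfloor}$ vertices with both color classes bipartite at every scale, and in the bipartite case adding one new vertex per new color, joined to everything in that color, works precisely because a monochromatic star cannot contain a non-star $H$.

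The genuine gap is in the upper bounds, and it is the same gap in both cases: recursing into the largest part of a Gallai partition does not by itself reduce the number of colors, so neither recursion closes as written. You flag this for the non-bipartite case, but the difficulty is structural rather than technical. For $H=C_5$, a color may join two large parts and simultaneously appear on an edge inside one of them without creating a monochromatic $C_5$, since the monochromatic graph $K_2\vee \overline{K_m}$ contains no cycle longer than $C_4$; so the reduced-graph colors are not ``consumed'' by the parts, and no amount of bookkeeping of which colors are ``genuinely in play'' rescues the recursion. (Nor does the trivial bound $GR_k(H)\leq R_k(H)$ help, since multicolor Ramsey numbers grow like $k^{\Theta(k)}$, not $c^k$.) The standard repair is to pass from $H$ to $K_N$ with $N=|V(H)|$ and run a double induction on $(k,N)$: for complete graphs the join structure does eliminate colors, because a color-$i$ copy of $K_{N-1}$ inside a part that is joined in color $i$ to another part immediately yields a color-$i$ copy of $K_N$. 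In the bipartite case you face the analogous problem and do not flag it: after isolating the dominant part you must still show that each of the $k$ colors either accounts for only a bounded number of vertices or forces $H$; ``loses only $O(1)$ vertices per inductive step'' presupposes an induction on $k$ that your argument never sets up. Two smaller slips: the number of parts in the Gallai partition is bounded by the ordinary two-color Ramsey number $R(H,H)$ (a monochromatic $H$ in the reduced graph lifts by taking one vertex per part), not by a bipartite Ramsey number, and since $H$ is fixed this bound is a constant, so its being ``polynomial'' is not the relevant point.
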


\section{Forcing Bichromatic Cycles}

\begin{thm}\label{lbpi1}
$M_k(\pi_1)>2^k$
\end{thm}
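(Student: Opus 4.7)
The plan is to exhibit, for each $k \geq 1$, a $(K_3,\pi_1,\varphi_3;k)$-coloring of $K_{2^k}$; this immediately yields $M_k(\pi_1) > 2^k$. I would construct the coloring recursively. For $k=1$, color the single edge of $K_2$ with color $1$. For $k \geq 2$, take two disjoint copies $A$ and $B$ of a valid $(k-1)$-coloring of $K_{2^{k-1}}$ and paint every edge between $A$ and $B$ with a new color $k$. The total number of colors is exactly $k$, since color $k$ is fresh and by induction all $k-1$ previous colors already appear inside each copy.

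Checking the triangle conditions is routine. Any triangle entirely inside $A$ or inside $B$ is handled by the inductive hypothesis. Any triangle meeting both parts has two vertices on one side and one on the other, so its two cross edges both carry color $k$ while its third edge lies inside a single part and uses some color in $\{1,\dots,k-1\}$. Such a triangle uses exactly two colors and is therefore neither monochromatic nor rainbow.

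The main work is ruling out $\pi_1$, and I would do a case analysis on how the four vertices of a putative $\pi_1$ distribute between $A$ and $B$. A $4$-$0$ split is covered by induction. A $3$-$1$ split forces the two cycle edges incident to the singleton vertex to carry color $k$ and the other two edges, which lie inside a single part, to avoid color $k$. In a $2$-$2$ split, either the cycle alternates between the parts so that all four of its edges are cross edges (yielding a monochromatic $C_4$ of color $k$, which is allowed since only $\pi_1$ is forbidden) or the two within-part edges sit as opposite edges of the cycle and exactly two cycle edges carry color $k$. In every non-inductive case the new color $k$ appears on $0$, $2$, or $4$ of the four cycle edges, never on exactly three, and any color other than $k$ can occur on at most the two within-part edges. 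Hence no color class can achieve the three-to-one split required by $\pi_1$. The only bookkeeping I expect to need care with is enumerating the Hamiltonian $4$-cycles in the $2$-$2$ case, but the three Hamiltonian cycles of $K_4$ are immediate to inspect, so I do not anticipate any real obstacle.
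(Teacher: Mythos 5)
Your proposal is correct and uses essentially the same construction as the paper: recursively joining two copies of the previous coloring by a fresh color (your $k=1$ base on $K_2$ produces, after one doubling, exactly the paper's $G_4$ up to swapping colors). The only difference is that you spell out the cross-part verification that the paper dismisses as ``an easy verification,'' and your case analysis there is sound.
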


\begin{proof}
For $k=2$, we produce a $(K_3,\pi_1,\varphi_3;2)$ coloring of $K_4$. Construct a monochromatic cycle $C_4$ and color the remaining edges by the other color. Call this colored graph $G_4$. To produce $G_8$, a $(K_3,\pi_1,\varphi_3;3)$-colored $K_8$, we connect two copies of $G_4$ by edges colored by a third color. Proceeding this way, we produce a $(K_3,\pi_1,\varphi_3;k)$-colored $K_{2^k}$ by connecting two copies of $G_{2^{k-1}}$ by edges colored by a $k^{th}$ color. If $G_{2^{k-1}}$ is $(K_3,\pi_1,\varphi_3;k-1)$-colored, then we only need check that a monochromatic $K_3, \pi_1$, or $\varphi_3$ does not occur between the two copies of $G_{2^{k-1}}$, which is an easy verification.
\end{proof}

\begin{thm}\label{colorfulvertexpi1}
For any positive integers $l,k,n$ so that $l\geq k$, if $2^{k}+1\leq n \leq 2^{k+1}$, then any $(K_3,\pi_1,\varphi_3;l)$ coloring of $K_n$ produces a vertex $v$ so that $c(v)>k$.
\end{thm}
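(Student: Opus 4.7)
My plan is to prove, by strong induction on $k$, the equivalent statement: every $(K_3,\pi_1,\varphi_3)$-coloring of $K_n$ in which $c(v) \leq k$ for every vertex $v$ has $n \leq 2^k$. The theorem follows at once, since $n \geq 2^k + 1$ then forces some vertex with $c(v) > k$. The base case $k = 1$ is immediate: a palette bound of $1$ forces the coloring to be monochromatic (by connectivity), and monochromaticity together with $n \geq 3$ produces a forbidden monochromatic $K_3$.

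The key preliminary is a structural lemma: in any $(K_3,\pi_1)$-coloring of a complete graph, each color class is a disjoint union of complete bipartite graphs. Let $H_i$ denote the subgraph of color-$i$ edges. If $x_1 x_2 x_3 x_4$ is a path in $H_i$, the edge $x_1 x_4$ cannot receive a different color without producing $\pi_1$, so $x_1 x_4 \in H_i$ as well. Combined with triangle-freeness, a shortest odd cycle in $H_i$ has length at least $5$, yet its first four vertices would then force a chord splitting it into a strictly shorter odd cycle --- a contradiction. So every component of $H_i$ is bipartite. Applying the same $P_4$-closing property to a shortest path of length $3$ between opposite sides of a bipartite component makes its endpoints adjacent, so each component is complete bipartite.

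For the inductive step, since the coloring is $\varphi_3$-free, Theorem \ref{G} yields a partition $V(K_n) = V_1 \sqcup \cdots \sqcup V_t$ with $t \geq 2$, only one color between each pair of parts, and at most two colors between parts overall. The reduced $2$-coloring of $K_t$ inherits $(K_3,\pi_1)$-freeness, so the structural lemma forces both of its color classes to be disjoint unions of complete bipartite graphs. This pins $t$ down to $\{2,3,4\}$: $R(3,3) = 6$ blocks $t \geq 6$, and $t = 5$ would force the unique triangle-free $2$-coloring of $K_5$ (two interlocking $C_5$'s), whose color classes are not disjoint unions of complete bipartite graphs.

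It then suffices to settle each of $t = 2, 3, 4$ by counting the between-part colors each vertex sees, deducting from the palette budget, and applying the induction hypothesis on $[V_i]$. For $t = 2$, every vertex sees the unique between-color, so the internal palette is at most $k-1$ and $|V_i| \leq 2^{k-1}$, giving $n \leq 2^k$. For $t = 3$, the reduced graph is a $2$-coloring of $K_3$ with a $P_3$ in one color and a single edge in the other: the two end parts see both between-colors (internal palette $\leq k-2$, size $\leq 2^{k-2}$) and the central part sees only one (size $\leq 2^{k-1}$), totalling $2^{k-2} + 2^{k-1} + 2^{k-2} = 2^k$. For $t = 4$, the reduced graph is forced to be a $K_{2,2}$ in one color plus a perfect matching in the other, so every part sees both between-colors and $|V_i| \leq 2^{k-2}$, giving $n \leq 4 \cdot 2^{k-2} = 2^k$. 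The main obstacle, I expect, is the reduced-graph classification via the structural lemma; once that is established, the per-case arithmetic is elementary.
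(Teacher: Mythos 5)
Your argument is correct, but it takes a genuinely different route from the paper's. The paper proves the theorem by induction on $n$ with a purely local analysis: it invokes Lemma~\ref{AJ} to find a vertex $v$ with a large monochromatic neighborhood $N_1(v)$, extracts by induction a vertex $u$ of high color degree inside $[N_1(v)]$, and then chases how the colors on edges from $u$ to the rest of the graph are forced (any deviation creating $K_3$, $\pi_1$, or $\varphi_3$) until a vertex of color degree exceeding $k$ appears; notably, the paper deliberately avoids the Gallai structure theorem in this section. You instead reformulate the statement as the clean bound ``all $c(v)\le k$ implies $n\le 2^k$,'' prove the structural lemma that in a $(K_3,\pi_1)$-coloring every color class is a disjoint union of complete bipartite graphs (the $P_4$-closing property plus triangle-freeness), and use Theorem~\ref{G} to classify the reduced graph as one of three shapes with $t\in\{2,3,4\}$, after which induction on $k$ closes each case with exact arithmetic summing to $2^k$. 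Your classification of the reduced graph checks out ($t\ge 6$ by $R(3,3)$, $t=5$ killed because the two-$C_5$ coloring is not $P_4$-closed, $t=4$ forced to be $C_4$ plus a perfect matching), and the reformulated statement does imply the theorem as stated, in fact without the hypotheses $l\ge k$ and $n\le 2^{k+1}$. The one step you leave implicit is the ``deduction from the palette budget'': you need that a between-part color seen by $v\in V_i$ cannot recur on an edge inside $V_i$ incident to $v$; this is immediate (such an internal edge $vw$ together with any vertex of the corresponding other part gives a monochromatic triangle), but it should be stated, since without it the internal palette of $[V_i]$ would not shrink and the induction would not close. Your approach buys a tight, transparent bound and arguably a cleaner proof of Corollary~\ref{pi1}; the paper's approach stays elementary and self-contained at the cost of a considerably more delicate case analysis.
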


\begin{proof}
We proceed by induction on $n$. By \cite{CKFZ}, $M_2(\pi_1)=5$. We check the statement for $n=3$ and $4$. In either case, if $c(v)=1$ for every vertex $v$, then $K_n$ is monochromatic, which is not allowed in a $(K_3,\pi_1,\varphi_3;l)$ coloring. 

Next, for some $n>4$, suppose the statement true for at most $n-1$ vertices and consider $G$, a $(K_3,\pi_1,\varphi_3;l)$ colored $K_{n}$. 

If $n>2^k+1$, then choose a vertex $v$ and remove it from $G$. Notice that the remaining graph is $(K_3,\pi_1,\varphi_3)$ free, and since $n-1\geq 2^k+1$, there are at least $k+1$ colors. Thus, $G-\{v\}$ contains a vertex $u$ so that $c(u)>k$.

If $n=2^k+1$, we consider any vertex $v$ incident to edges of the same color, say $1$. If $\left|N_1(v)\right|\geq 2^{k-1}+1$, then by induction, we can find a vertex $u\in N_1(v)$ so that $c(u)\geq k$ in the induced subgraph $H=[N_1(v)]$. Restricting out attention to $H$, if $1\in \CC(u)$ in $H$, then that edge together with incident edges from $v$ would form a monochromatic triangle, which is impossible by the definition of the coloring. Thus, $c(u)\geq k+1$ in $G$. This observation leaves us to consider the case that for any vertex $v\in G$ and color $i$, $\left|N_i(v)\right|\leq 2^{k-1}$, and thus, $v$ is adjacent to at least $2^{k-1}$ vertices by edges not colored $1$. Let $I=\CC(v)\cap\CC(H)$ and call $H'$ the induced subgraph on the neighbors of $v$ incident to edges of these colors, that is $[N_I(v)]=H'$. Let $J=(\CC(v)-(\{1\}\cup I))$ and call $H''$ the induced subgraph on the neighbors of $v$ incident to edges of these colors, that is $[N_J(v)]=H''$.

Lemma \ref{AJ} guarantees the existence of $v$ so that $\left|N_i(v)\right|\geq\frac{n+1}{3}$ for some color $i$. Without loss of generality, say $1=i$ and notice that by induction, $H=[N_1(v)]$ must contain a vertex $u$ so that $c(u)>\left\lfloor\log_2(\frac{n+1}{3})\right\rfloor=\left\lfloor\log_2(\frac{2}{3})+\log_2(2^{k-1}+1)\right\rfloor\geq \left\lfloor\ k-1.585\right\rfloor=k-2$.

Next, consider any vertex $w\in H'$. Notice $c(vw)=i$ and $c(uw)=i$ produces a $\pi_1$ subgraph. Assigning $c(uw)=j$ for some color $j\notin \{1,i\}$, produces $\varphi_3$. Thus, we are forced to conclude that $c(uw)=1$.

For any $x\in H''$ so that $c(vx)=j$ for some color $j$, if $c(ux)=j$, then together with the color $1$ on the edge $vu$, $c(u)\geq k+1$. To avoid a rainbow triangle, $c(ux)\in \{1,j\}$. Thus, we are left with the case where $c(ux)=1$.

\begin{center}

\begin{tikzpicture}[>=stealth]

\filldraw[fill=gray!20!white] (0,1) ellipse (40pt and 20pt);
\filldraw[fill=gray!20!white] (3,0) ellipse (20pt and 40pt);
\filldraw[fill=gray!20!white] (-3,0) ellipse (20pt and 40pt);

\draw [line width=1.5, style=dashed, color=blue] (0,-1) -- (1,1);
\draw [line width=1.5, style=dashed, color=blue] (0,-1) -- (-1,1);
\draw [line width=1.5, style=dashed, color=blue] (0,-1) -- (0,1.25);
\draw [line width=1.5, color=black] (1,1) -- (0,1.25);
\draw [line width=1.5, color=black] (0,-1) -- (3,0);
\draw [line width=1.5, style=dashed, color=blue] (1,1) .. controls (1.5,1.5) and (2.5,1.5) .. (3,0);
\draw [line width=1, color=red!50] (0,-1) -- (-3,0);
\draw [line width=1.5, style=dashed, color=blue] (1,1) .. controls (.5,2.5) and (-2.5,2) .. (-3,0);

\filldraw
(0,-1) circle (2pt)
(1,1) circle (2pt)
(-1,1) circle (2pt)
(0,1.25) circle (2pt)
(3,0) circle (2pt)
(-3,0) circle (2pt)
;
 
\node at (.25,-1.25)
{$v$};

\node at (-1.75,1)
{$H$};

\node at (-.75,0)
{$1$};

\node at (.5,.85)
{$i$};

\node at (1.75,-.75)
{$i$};

\node at (1.15,.85)
{$u$};

\node at (3.25,-.25)
{$w$};

\node at (4,0)
{$H'$};

\node at (-4,0)
{$H''$};

\node at (-3.25,-.25)
{$x$};

\node at (-1.75,-.75)
{$j$};




\end{tikzpicture}

\end{center}

Since $c(uv)=1$ and $c(uw)=1$ for any $w \notin H$, $\left|N_1(u)\right|\geq 2^{k-1}+1$ and by induction, we can find a vertex $x\in [N_1(u)]$ which is incident to at least $k$ edges of different colors in $[N_1(u)]$, none of which are $1$. Since $c(ux)=1$, $c(x)\geq k+1$ in $G$.
\end{proof}

\begin{cor}\label{pi1}
$M_k(\pi_1)=2^k+1$
\end{cor}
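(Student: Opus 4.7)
The plan is to derive the corollary by sandwiching $M_k(\pi_1)$ between the two results immediately preceding it. Theorem \ref{lbpi1} already supplies the lower bound $M_k(\pi_1) > 2^k$, i.e.\ $M_k(\pi_1) \geq 2^k + 1$, by exhibiting an explicit $(K_3,\pi_1,\varphi_3;k)$-coloring of $K_{2^k}$. So the only thing left is the matching upper bound $M_k(\pi_1) \leq 2^k+1$.

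For the upper bound I would argue by contradiction: suppose there exists a $(K_3,\pi_1,\varphi_3;k)$-coloring of $K_{2^k+1}$. Then, taking $l = k$ and $n = 2^k + 1$ in Theorem \ref{colorfulvertexpi1} (the hypothesis $2^k + 1 \leq n \leq 2^{k+1}$ is satisfied, and $l \geq k$ holds with equality), the theorem produces a vertex $v$ with $c(v) > k$. But the coloring uses only $k$ colors, so trivially $c(v) \leq k$, a contradiction. Hence no such coloring of $K_{2^k+1}$ exists, giving $M_k(\pi_1) \leq 2^k + 1$.

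Combining the two bounds yields $M_k(\pi_1) = 2^k + 1$. There is really no obstacle here: all the technical work was absorbed into Theorem \ref{colorfulvertexpi1}, and the corollary is essentially a one-line deduction. If any care is needed, it is only in verifying that the parameter choice $l = k$, $n = 2^k + 1$ actually falls inside the range of hypotheses of Theorem \ref{colorfulvertexpi1}, which it does.
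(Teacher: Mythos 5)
Your proposal is correct and is exactly the intended argument: the paper states the corollary without proof precisely because it follows by combining the lower-bound construction of Theorem \ref{lbpi1} with Theorem \ref{colorfulvertexpi1} applied with $l=k$, $n=2^k+1$, which forces a vertex seeing more than $k$ colors and hence contradicts the existence of a $(K_3,\pi_1,\varphi_3;k)$-coloring of $K_{2^k+1}$. No gaps; your parameter check is the only point of care and you handled it.
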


\begin{thm}\label{pi2}
\[ M_k(\pi_2) = 
    \left\{\begin{array}{ll}
    6, & \mbox{ when } k=2\\
    5, & \mbox{ when } k=3\\
    min(k), & \mbox{ when } k\geq4
    \end{array}\right. \]
\end{thm}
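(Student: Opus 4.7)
The plan is to first establish a strong color-degree constraint on any $(K_3,\pi_2,\varphi_3)$-coloring and then deduce each of the three cases.

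The cornerstone will be a color-degree lemma: for every vertex $v$ and color $c$, $|N_c(v)|\le 2$. I would prove this by contradiction: if $u_1,u_2,u_3\in N_c(v)$, then for any pair $u_i,u_j$ the $4$-cycle $u_i\,v\,u_j\,u_k$ through the third neighbor has both edges at $v$ of color $c$ and adjacent, so $\pi_2$-freeness forces $c(u_iu_k)\ne c(u_ju_k)$. Ranging over the three choices of apex gives a proper edge coloring of the $K_3$ induced on $\{u_1,u_2,u_3\}$, which is necessarily rainbow, contradicting $\varphi_3$-freeness.

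With the color-degree bound in hand, each color class is a union of paths and cycles of length $\ge 4$ with at most $n$ edges. I would then invoke Theorem~\ref{G}: let $V_1,\dots,V_t$ be a Gallai partition with at most two inter-part colors (call them red and blue). For a vertex $v\in V_i$, summing $|V_j|$ over the red-neighbor parts of $V_i$ gives at most $2$, and likewise for blue, forcing $n-|V_i|\le 4$. A finite case analysis over $t$ and size distributions $(|V_1|,\dots,|V_t|)$ then shows: for $n\ge 6$ no such Gallai partition exists (already the singleton partition $t=n$ fails because the reduced $K_n$ would require red-degree and blue-degree each at most $2$, summing to at most $4<n-1$); for $n=5$ the unique feasible configuration is $t=5$ singletons with the reduced $K_5$ decomposed into two edge-disjoint $C_5$'s, giving exactly two colors; and for $n=4$ a standard Gallai partition count bounds the number of colors by three. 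Consequently no $(K_3,\pi_2,\varphi_3)$-coloring of any complete graph uses more than three colors.

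For the matching constructions, when $k=3$ I would exhibit a valid coloring of $K_4$ using the Gallai partition with parts $\{a,b\}$ and $\{c,d\}$: color the four between-part edges red, the edge $ab$ blue, and the edge $cd$ green; inspecting the four triangles and three $4$-cycles verifies the required properties, so $M_3(\pi_2)\ge 5$. When $k=2$, the decomposition of $K_5$ into two edge-disjoint $C_5$'s is a valid $2$-coloring, and the upper bound $M_2(\pi_2)\le 6$ follows from $R(3,3)=6$, which forces a monochromatic triangle in any $2$-coloring of $K_6$. When $k\ge 4$, since no valid coloring exists on any $K_n$, the convention fixed in Section~1 yields $M_k(\pi_2)=\min(k)$. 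The main obstacle will be the case enumeration of Gallai partitions for small $n$: although each individual case is mechanical, one must carefully exhaust both the ``$C_4$-plus-matching'' and ``$P_4$-plus-$P_4$'' patterns of a triangle-free $2$-colored reduced $K_4$ and every compatible size distribution to rule out all configurations outside the asserted list.
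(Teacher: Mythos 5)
Your proof is correct, and the impossibility half takes a genuinely different route from the paper's. The shared core is the observation that three neighbours of a vertex in one colour force $K_3$, $\varphi_3$, or $\pi_2$: the paper invokes it only once, via Lemma~\ref{AJ}, to rule out $n\geq 6$, and then disposes of $K_5$ and $K_4$ by direct local case analysis on the colour sets $\CC(v)$ (showing $c(v)=2$ everywhere, then pigeonholing on the $\binom{3}{2}$ possible colour pairs, with explicit figures). You instead promote the observation to a uniform colour-degree bound $|N_c(v)|\leq 2$ and feed it into Gallai's structure theorem (Theorem~\ref{G}), obtaining $n-|V_i|\leq 4$ for every block and then enumerating the feasible reduced graphs; this recovers $n\leq 5$ in one stroke, pins down $n=5$ as the two-pentagon colouring (hence exactly two colours, giving $M_3(\pi_2)\le 5$), and caps $K_4$ at three colours, which yields the $\min(k)$ convention for $k\geq 4$. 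Your route is more systematic and matches the machinery the paper deploys in its later sections, at the cost of an explicit enumeration of triangle-free two-coloured reduced graphs; the paper's route is more elementary but more ad hoc. Two small points to tighten: in the colour-degree lemma, $\pi_2$-freeness alone only forbids $c(u_iu_k)=c(u_ju_k)$ when that common colour differs from $c$, and you need $K_3$-freeness to exclude $c(u_iu_k)=c(u_ju_k)=c$ (the monochromatic triangle $vu_ju_k$) before concluding the induced triangle on $\{u_1,u_2,u_3\}$ is rainbow; and for $n=5$ you should record explicitly that the two-pentagon colouring contains no $\pi_2$ (its induced four-vertex subgraphs realise only $\pi_1$ and $\pi_3$ patterns), since that is what makes it a valid colouring rather than merely the only surviving candidate.
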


\begin{proof}
We produce a $(K_3,\pi_2,\varphi_3;2)$-colored $K_5$ and a $(K_3,\pi_2,\varphi_3;3)$-colored $K_4$ below.

\begin{center}

\begin{tikzpicture}[>=stealth]
	\tikzstyle{vertex}=[circle,minimum size=5pt,inner sep=0pt]
	\tikzstyle{edge} = [draw,thick,-,black]
	\tikzstyle{selected edge} = [draw,thick, -,red!50]
	\tikzstyle{dash} = [draw,thick,-,style=dashed,blue]
	
	\node[vertex] (v1) at (0,1) 
[label=above left:$v_1$][circle,draw=black!20,fill=black!]{};
	\node[vertex] (v2) at (1,.25)
[label=right:$v_2$][circle,draw=black!20,fill=black!]{};
	\node[vertex] (v3) at (.5,-.5)
[label=below:$v_3$][circle,draw=black!20,fill=black!]{};
	\node[vertex] (v4) at (-.5,-.5)
[label=below:$v_4$][circle,draw=black!20,fill=black!]{};
	\node[vertex] (v5) at (-1,.25)
[label=left:$v_5$][circle,draw=black!20,fill=black!]{};
\draw[edge] (v1) -- (v2) -- (v3) -- (v4) -- (v5) -- (v1);
\draw[selected edge] (v1) -- (v3) -- (v5) -- (v2) -- (v4) -- (v1);

	\node[vertex] (v1) at (4,1) 
[label=above:$v_1$][circle,draw=black!20,fill=black!]{};
	\node[vertex] (v2) at (5,1)
[label=above:$v_2$][circle,draw=black!20,fill=black!]{};
	\node[vertex] (v3) at (5,0)
[label=below:$v_3$][circle,draw=black!20,fill=black!]{};
	\node[vertex] (v4) at (4,0)
[label=below:$v_4$][circle,draw=black!20,fill=black!]{};
	
\draw[edge] (v2) -- (v3) -- (v1) -- (v4) -- (v2);
\draw[selected edge] (v1) -- (v2);
\draw[dash] (v3) -- (v4);
\end{tikzpicture}

\end{center}

It is shown in \cite{CKFZ} that $M_2(\pi_2)=6$.

If the order of the graph is at least six, Lemma \ref{AJ} guarantees a vertex $v$ so that $\left|N_i(v)\right|\geq\frac{n+1}{3}$ for some color $i$. Therefore, if $n>5$, $\left|N_i(v)\right|\geq 3$, which produces $\pi_2$ as illustrated below.

\begin{center}
\begin{tikzpicture}[>=stealth]
	\tikzstyle{vertex}=[circle,minimum size=5pt,inner sep=0pt]
	\tikzstyle{edge} = [draw,thick,-,black]
	\tikzstyle{selected edge} = [draw,thick, -,red!50]
	\tikzstyle{dash} = [draw,thick,-,style=dashed,blue]
	\node[vertex] (v1) at (0,0) [label=above left:$v_1$][circle,draw=black!20,fill=black!]{};
	\node[vertex] (v2) at (1,-1)[label=right:$v_2$][circle,draw=black!20,fill=black!]{};
	\node[vertex] (v3) at (0,-2)[label=below:$v_3$][circle,draw=black!20,fill=black!]{};
	\node[vertex] (v4) at (-1,-1)[label=left:$v_4$][circle,draw=black!20,fill=black!]{};	
\draw[edge] (v1) -- (v2)  (v1) -- (v3)  (v1) -- (v4);
\draw[selected edge] (v2) -- (v4) (v4) -- (v3);
\draw[dash] (v2) -- (v3);
\end{tikzpicture}
\end{center}


If $n=k=4$ and for some vertex $v$, $c(v)=3$, then color $4$ cannot be placed on any remaining edge without creating $\varphi_3$, and $c(v)=2$ for all $v$. Suppose the edges incident to vertices $v_1$ and $v_2$ do not share a color pair, and say $c(v_1v_2)=1, c(v_1v_4)=2, c(v_2v_3)=3$. Notice that color $4$ cannot be placed on any remaining edge without creating $\varphi_3$. Thus, we are left with pairs of vertices that share a color pair on incident edges, illustrated in the figure below.

\begin{center}

\begin{tikzpicture}[>=stealth]
	\tikzstyle{vertex}=[circle,minimum size=5pt,inner sep=0pt]
	\tikzstyle{edge} = [draw,thick,-,black]
	\tikzstyle{selected edge} = [draw,thick, -,red!50]
	\tikzstyle{dash} = [draw,thick,-,style=dashed,blue]
	
	\node[vertex] (v1) at (0,1) 
[label=above:$v_1$][circle,draw=black!20,fill=black!]{};
	\node[vertex] (v2) at (1,1)
[label=above:$v_2$][circle,draw=black!20,fill=black!]{};
	\node[vertex] (v3) at (1,0)
[label=below:$v_3$][circle,draw=black!20,fill=black!]{};
	\node[vertex] (v4) at (0,0)
[label=below:$v_4$][circle,draw=black!20,fill=black!]{};

\draw[edge] (v2) -- (v3) -- (v1) -- (v4) -- (v2);
\draw[selected edge] (v1) -- (v2);
\draw[dash] (v3) -- (v4);	

	\node[vertex] (v1) at (4,1) 
[label=above:$v_1$][circle,draw=black!20,fill=black!]{};
	\node[vertex] (v2) at (5,1)
[label=above:$v_2$][circle,draw=black!20,fill=black!]{};
	\node[vertex] (v3) at (5,0)
[label=below:$v_3$][circle,draw=black!20,fill=black!]{};
	\node[vertex] (v4) at (4,0)
[label=below:$v_4$][circle,draw=black!20,fill=black!]{};
	
\draw[edge] (v4) -- (v1) -- (v2) -- (v3);
\draw[selected edge] (v1) -- (v3) (v2) -- (v4);
\draw[dash] (v3) -- (v4);

\end{tikzpicture}

\end{center}

Notice that the color of the edge $v_3v_4$ in the second figure cannot be a color different from the colors present on the edges between $v_1,v_2,v_3,v_4$, without producing $\varphi_3$. Thus, in either figure, at most three colors are used, which contradicts that $k=4$.

Observe that if $k>4$ and $n\leq 4$, then $\varphi_3$ cannot be avoided.

For the remaining cases of any $(K_3,\pi_2,\varphi_3)$-colored $K_5$ by at least three colors, notice by the figure above, that for any vertex $v$, $c(v)\in \{2,3,4\}$. We label the remaining vertices $v_1,v_2,v_3,v_4$. If $c(v)>2$, say $\CC(vv_1)=1,\CC(vv_2)=2,\CC(vv_3)=3$ and without loss of generality let $\CC(v_1v_2)=1$. Notice that $\CC(v_2v_3)\in \{2,3\}$ and the argument is identical for either color. Thus, let $\CC(v_2v_3)=2$ and notice that we are forced to conclude that $\CC(v_1v_3)=1$. However, this produces $\pi_2$ on the vertices $v,v_1,v_3,v_2$.

For the last case, notice that for every vertex $v$, $c(v)=2$, otherwise we refer to a previous case. If $k=3$, then there are exactly ${3\choose 2}=3$ ways to pick two color classes of edges at any vertex. However, there are five vertices, so by the pigeonhole principle at least two vertices share a pair of color classes. Call two such vertices $v_1$ and $v_2$ and let $c(v_1v_2)=1$. We illustrate the coloring in the following figure:

\begin{center}

\begin{tikzpicture}[>=stealth,scale=2]
	\tikzstyle{vertex}=[circle,minimum size=5pt,inner sep=0pt]
	\tikzstyle{edge} = [draw,thick,-,black]
	\tikzstyle{red} = [draw,thick, -,red!50]
  \tikzstyle{dash} = [draw,thick,-,style=dashed,blue]
	\node[vertex] (v4) at (0,1) 
[label=above:$v_4$][circle,draw=black!20,fill=black!]{};
	\node[vertex] (v5) at (1,.25)
[label=right:$v_5$][circle,draw=black!20,fill=black!]{};
	\node[vertex] (v1) at (.5,-.5)
[label=below:$v_1$][circle,draw=black!20,fill=black!]{};
	\node[vertex] (v2) at (-.5,-.5)
[label=below:$v_2$][circle,draw=black!20,fill=black!]{};
	\node[vertex] (v3) at (-1,.25)
[label=left:$v_3$][circle,draw=black!20,fill=black!]{};
\draw[edge] (v1) -- (v2)  (v2) -- (v3) (v1) -- (v5);
\draw[dash] (v1) -- (v4)  (v1) -- (v3) (v2) -- (v4)  (v2) -- (v5);

\node at (0,-.65)
{$1$};

\node at (-.25,.75)
{$2$};

\end{tikzpicture}

\end{center}

Notice that edges between the vertices $v_3,v_4,v_5$ can only be colored by $1$ or $2$ to avoid rainbow triangles. However, this means that $k=2$, which is a contradiction.

If $k>3$, and no two vertices have the same pair of color classes, then the coloring forces a rainbow triangle as shown below:

\begin{center}

\begin{tikzpicture}[>=stealth,scale=2]
	\tikzstyle{vertex}=[circle,minimum size=5pt,inner sep=0pt]
	\tikzstyle{edge} = [draw,thick,-,black]
	\tikzstyle{red} = [draw,thick, -,red!50]
  \tikzstyle{dash} = [draw,thick,-,style=dashed,blue]
	\node[vertex] (v4) at (0,1) 
[label=above:$v_4$][circle,draw=black!20,fill=black!]{};
	\node[vertex] (v5) at (1,.25)
[label=right:$v_5$][circle,draw=black!20,fill=black!]{};
	\node[vertex] (v1) at (.5,-.5)
[label=below:$v_1$][circle,draw=black!20,fill=black!]{};
	\node[vertex] (v2) at (-.5,-.5)
[label=below:$v_2$][circle,draw=black!20,fill=black!]{};
	\node[vertex] (v3) at (-1,.25)
[label=left:$v_3$][circle,draw=black!20,fill=black!]{};
\draw[edge] (v1) -- (v2)  (v2) -- (v3) (v1) -- (v5);
\draw[dash] (v1) -- (v4)  (v1) -- (v3);
\draw[red]  (v2) -- (v4)  (v2) -- (v5);

\node at (0,-.65)
{$1$};

\node at (-.25,.75)
{$3$};

\node at (.25,.75)
{$2$};
\end{tikzpicture}

\end{center}

Thus, some pair of vertices must have the same pair of color classes, which returns us to the case when $k=2$. 

\end{proof}

\begin{thm}\label{pi3}
$M_k(\pi_3)=k+2$
\end{thm}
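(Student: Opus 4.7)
The plan is to establish both bounds $M_k(\pi_3) \geq k+2$ and $M_k(\pi_3) \leq k+2$, the first by an explicit construction and the second by strong induction on $k$ together with the Gallai structure theorem.

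For the lower bound I would exhibit a $(K_3, \pi_3, \varphi_3; k)$-coloring of $K_{k+1}$ in which every color class is a star, thereby avoiding $\pi_3$ automatically since no two edges of a star are disjoint. Label the vertices $v_0, v_1, \ldots, v_k$, color $v_0 v_i$ by color $i$ for each $1 \leq i \leq k$, and color $v_i v_j$ by $\min(i,j)$ whenever $1 \leq i < j \leq k$. Then color $i$ is the star centered at $v_i$ with leaves $v_0, v_{i+1}, \ldots, v_k$. Stars are triangle-free so no monochromatic $K_3$ appears, and every triangle in $K_{k+1}$ uses at most two colors by direct inspection, so no rainbow triangle appears either.

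For the upper bound I would proceed by strong induction. The base case $k=2$ asserts that every $2$-coloring of $K_4$ without a monochromatic triangle contains $\pi_3$: Tur\'an's theorem forces each color class to have at most four edges, so up to symmetry the coloring is either a perfect matching plus $K_{2,2}$ or two copies of $P_4$, and each configuration directly contains $\pi_3$. For the inductive step assume the result for all $j < k$ and suppose $K_n$ with $n \geq k+2$ admits a $(K_3, \pi_3, \varphi_3; k)$-coloring. Theorem \ref{G} produces a Gallai partition $V_1, \ldots, V_r$ with $r \geq 2$ whose cross edges use at most two colors; let $C_i$ be the set of colors used inside $V_i$. The goal is to show $n \leq k+1$, giving a contradiction.

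The key structural step is $r \leq 3$: if $r \geq 4$, one representative from each of any four parts induces a $2$-coloring of $K_4$ with no monochromatic triangle, and the base case then forces $\pi_3$ inside. For $r = 2$, the unique cross color (say color $1$) cannot appear inside either part, since a cross edge would complete a monochromatic triangle, and a four-vertex configuration with two vertices in each part forbids any other color from appearing in both parts. Thus $C_1 \sqcup C_2 = \{2, \ldots, k\}$, and the inductive hypothesis $|V_i| \leq |C_i| + 1$ yields $n \leq (|C_1| + 1) + (|C_2| + 1) = k+1$. For $r = 3$, the reduced $K_3$ uses both cross colors, and after relabelling we may assume $V_1 V_2$ and $V_2 V_3$ are color $1$ while $V_1 V_3$ is color $2$. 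Monochromatic-triangle constraints remove colors $1, 2$ from the interiors of $V_1, V_3$ and color $1$ from $V_2$, while a four-vertex configuration with two vertices in $V_2$ and one each in $V_1, V_3$ produces $\pi_3$ whenever color $2$ is used inside $V_2$, ruling this out as well. A two-in-each-part check across each pair of parts then forces the $C_i$ to be pairwise disjoint subsets of $\{3, \ldots, k\}$, so $|C_1| + |C_2| + |C_3| = k-2$ and induction again gives $n \leq k+1$.

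The delicate step is the $r = 3$ case, because the three parts play asymmetric roles with respect to the two cross colors and one must carefully track which colors each interior is permitted to use. The crucial ingredient is the four-vertex check excluding color $2$ from the interior of $V_2$; without it, monochromatic-triangle bookkeeping alone yields only the loose bound $n \leq k+2$. The $r \leq 3$ reduction through the base case is the clean conceptual step that eliminates larger Gallai partitions wholesale.
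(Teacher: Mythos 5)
Your proof is correct, but your upper bound takes a genuinely different route from the paper's. The lower-bound construction is the same in substance: your star decomposition is exactly the paper's lexical ``greedy coloring'' of $K_{k+1}$, and both avoid $\pi_3$ for the same reason (no color class contains two disjoint edges). For the upper bound the paper stays local: it fixes a vertex $v_1$, partitions the remaining vertices into the neighborhoods $S_i=N_i(v_1)$, shows that the color sets $\CC(v_1),\CC([S_1]),\CC([S_2]),\dots$ are pairwise disjoint (any repetition forces $\pi_3$, a monochromatic triangle, or a rainbow triangle), and then counts colors by induction on the order to conclude that a valid coloring of $K_N$ needs at least $N-1$ colors. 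You instead invoke the Gallai structure theorem (Theorem \ref{G}), reduce to at most three blocks via your $k=2$ base case, and run an analogous ``pairwise disjoint interior palettes'' count over the blocks with induction on the number of colors. The two decompositions are morally parallel --- in both, cross colors are barred from part interiors, interior color sets must be pairwise disjoint, and then $n\le\sum\left(|C_i|+1\right)\le k+1$ --- but yours imports the global machinery that the paper deliberately defers to later sections, in exchange for a cleaner case structure ($r\le 3$, then $r=2$ and $r=3$) and an induction hypothesis (``a $(K_3,\pi_3,\varphi_3)$-coloring with $j$ colors lives on at most $j+1$ vertices'') that is stated more transparently than the paper's rather terse color count. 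Your handling of the $r=3$ subtlety --- excluding the minority cross color from the interior of the middle block via a four-vertex $\pi_3$ check --- is the one step with no direct analogue in the paper, and it is carried out correctly.
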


\begin{proof}
To show the lower bound, we color $K_{k+1}$ by a lexical ``greedy coloring". Label the vertices of $K_{k+1}$ by $\{v_1, \dots, v_{k+1}\}$ and color the edges incident to $v_1$ by color $1$. Next, color the edges incident to $v_2$ which have not been previously colored, by color $2$. Continue this way, until all edges are colored. 

To show the upper bound, let $G$ be a $(K_3,\pi_3,\varphi_3)$-colored $K_{k+2}$. Choose a vertex $v_1$ and for any color $i$, let $S_i=N_i(v_1)$. We argue that for any two colors $i,j$, no color of an edge in $[S_i]$ can be repeated on edges in $[S_j]$. Consider the figure below where $\{v_2,v_3\}\subseteq S_1$ and $\{v_4,v_5\}\subseteq S_2$, and colors $1,2$, and $3$ are as labeled. 

\begin{center}

\begin{tikzpicture}[>=stealth,scale=2]
	\tikzstyle{vertex}=[circle,minimum size=5pt,inner sep=0pt]
	\tikzstyle{edge} = [draw,thick,-,black]
	\tikzstyle{red} = [draw,thick, -,red!50]
  \tikzstyle{dash} = [draw,thick,-,style=dashed,blue]
	\node[vertex] (v1) at (0,1) 
[label=above:$v_1$][circle,draw=black!20,fill=black!]{};
	\node[vertex] (v2) at (1,.25)
[label=right:$v_2$][circle,draw=black!20,fill=black!]{};
	\node[vertex] (v3) at (.5,-.5)
[label=below:$v_3$][circle,draw=black!20,fill=black!]{};
	\node[vertex] (v4) at (-.5,-.5)
[label=below:$v_4$][circle,draw=black!20,fill=black!]{};
	\node[vertex] (v5) at (-1,.25)
[label=left:$v_5$][circle,draw=black!20,fill=black!]{};
\draw[edge] (v1) -- (v2)  (v1) -- (v3);
\draw[dash] (v1) -- (v4)  (v1) -- (v5);
\draw[red] (v2) -- (v3) (v4) -- (v5);

\node at (.6,.75)
{$1$};

\node at (-.6,.75)
{$2$};

\node at (.8,-.25)
{$3$};
\end{tikzpicture}

\end{center}

Without loss of generality, suppose $c(v_2v_5)=1$. To avoid a rainbow triangle on $v_1,v_2,v_4$ and on $v_2,v_4,v_5$, notice that $c(v_2v_4)=1$. To avoid rainbow triangles on $v_2,v_3,v_5$ and $v_1,v_3,v_5$, we have $c(v_3v_5)=1$. To avoid rainbow triangles in induced subgraphs on $v_1,v_3,v_4$ and $v_3,v_4,v_5$, $c(v_3v_4)=1$. However, notice that this produces $\pi_3$ in the induced subgraph on $v_2,v_5,v_4,v_3$.

Next, we show that no color of $\CC(v_1)$ can be repeated in $[S_i]$ for any $i\in \{1, \dots, c(v)\}$. Suppose in the above figure, $\CC(v_2v_3)=2$. If $c(v_2v_5)=1$ or $c(v_3v_5)=1$, then we produce $\pi_3$ in the induced subgraph with vertices $v_1,v_2,v_3,v_5$. However, the only color available for those edges is $2$, which produces a monochromatic triangle.

With these properties we have
\begin{align}
c(G)\geq c(v)+\sum_{i=1}^{c(v)}c([S_i]) \label{1}
\end{align}
For the case of two colors, \cite{CKFZ} showed that $GR(K_3,\pi_3,\varphi_3;2)=4$. By induction on the order of the graph, we assume that for some integer $N>4$, complete graphs of order $n<N$ can be colored by $n-1$ colors avoiding monochromatic triangles, rainbow triangles, and $\pi_3$, but not by $n-2$ colors. Absorbing the count of the colors at $v$ into the sum in (\ref{1}), we have
\[c(G)\geq \sum_{i=1}^{c(v)}{|S_i|}=N-1.\]
Setting $N=k+2$ shows that $GR(K_3,\pi_3,\varphi_3;k)\leq k+2$.
\end{proof}

\section{Forcing Multi-chromatic Four-Cycles}

\begin{thm}\label{pi4}
\[ M_k(\pi_4) = 
    \left\{\begin{array}{ll}
    5, & \mbox{ when } k=3\\
    min(k), & \mbox{ when } k\geq 4
    \end{array}\right. \]
\end{thm}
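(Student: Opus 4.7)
I treat the two cases $k\geq 4$ and $k=3$ separately.

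For $k\geq 4$ the proof is short. By Theorem \ref{G} and a one-line induction on $n$, any Gallai-colored $K_n$ uses at most $n-1$ colors: when the Gallai partition has $m\geq 2$ parts of sizes $n_1,\ldots,n_m$, the reduced graph contributes one color if $m=2$ and two colors if $m\geq 3$, giving totals at most $1+\sum(n_i-1)=n-1$ and $2+\sum(n_i-1)=n+2-m\leq n-1$, respectively. Since $\binom{k}{2}\geq k$ for $k\geq 3$, we have $\min(k)\leq k$, so every $k$-coloring of $K_{\min(k)}$ uses more than $\min(k)-1$ colors and must contain a rainbow triangle. The matching lower bound follows from the paper's convention, since $K_{\min(k)-1}$ has fewer than $k$ edges and admits no $k$-coloring at all.

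For $k=3$ the lower bound is by an explicit construction on $K_4$. Label the vertices $v_1,v_2,v_3,v_4$, color $v_1v_2$ by color $1$, the four edges joining $\{v_1,v_2\}$ to $\{v_3,v_4\}$ by color $2$, and $v_3v_4$ by color $3$. Every triangle repeats color $2$ (hence is neither monochromatic nor rainbow), and each of the three $4$-cycles of $K_4$ is either monochromatic in color $2$ or realizes the pattern $(1,2,3,2)=\pi_5\neq \pi_4$.

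For the upper bound, suppose $K_5$ admits a $(K_3,\pi_4,\varphi_3;3)$-coloring. Lemma \ref{AJ} supplies a vertex $v$ and a color, WLOG color $1$, with $|N_1(v)|\geq 2$; fix $u,w\in N_1(v)$. If $|N_1(v)|\geq 3$, pick a third color-$1$ neighbor $x$: the triangle $uwx$ avoids color $1$ and cannot be monochromatic, so some vertex of it, say $u$, meets two differently colored edges of $uwx$ in $\{2,3\}$, and the $4$-cycle $vwux$ realizes $\pi_4$, with its two color-$1$ edges incident to $v$ and its two remaining edges meeting at $u$ and carrying distinct colors in $\{2,3\}$. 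The case $|N_1(v)|=4$ is the same argument applied inside the $2$-colored $K_4$ on $v$'s neighbors. Otherwise $|N_1(v)|=2$; let $x,y$ be the remaining two neighbors of $v$. The rainbow-free condition at each triangle through $v$ restricts every edge among $\{u,w,x,y\}$ to a two-element color set, after which a systematic split on $\CC(uw)\in\{2,3\}$ and on whether $\CC(vx)=\CC(vy)$, combined with the $\pi_4$-free condition on the $4$-cycles $vuwx$, $vuxy$, and $vwxy$, forces each remaining edge color uniquely. Every branch then terminates in a monochromatic triangle, a $\pi_4$ in another $4$-cycle, or a coloring using only two colors, each a contradiction. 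This exhaustive subcase analysis in the $|N_1(v)|=2$ branch is the main technical obstacle; the earlier cases are by comparison immediate.
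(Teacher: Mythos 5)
Your $K_4$ construction for $k=3$ and your $|N_1(v)|\geq 3$ argument are both correct and essentially identical to the paper's (the paper colors the same $K_4$ with the roles of the colors permuted, and extracts $\pi_4$ from a vertex with three same-colored neighbors exactly as you do). However, the proposal has two genuine gaps. The first is in the case $k\geq 4$: the convention $M_k(\pi_4)=min(k)$ is only available once one knows that \emph{no} complete graph of \emph{any} order admits a $(K_3,\pi_4,\varphi_3;k)$-coloring. Your color-counting argument (a Gallai-colored $K_n$ carries at most $n-1$ colors) rules this out only for $n\leq k$; for $n\geq k+1$ there are many rainbow-triangle-free, monochromatic-triangle-free colorings with exactly $k$ colors (for instance the blow-up constructions in Theorem \ref{lbpi1}), and one must show that every such coloring contains $\pi_4$. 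The paper does this via Lemma \ref{AJ} for $n\geq 6$ (a vertex $v$ with three neighbors in one color class spans a two-colored triangle avoiding that color, which closes up through $v$ into a $\pi_4$) and via a separate argument for $K_5$. In particular the case $(k,n)=(4,5)$ is untouched by counting, since a Gallai coloring of $K_5$ can carry four colors. As written, your $k\geq 4$ case only proves that every $k$-coloring of $K_{min(k)}$ contains a rainbow triangle, which is not the full statement.

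The second gap is the $k=3$ upper bound on $K_5$. The branch $|N_1(v)|=2$ is the entire content of $M_3(\pi_4)\leq 5$, and you only assert that a ``systematic split \dots forces each remaining edge color uniquely'' and that ``every branch terminates'' in a contradiction; that is a claim, not a proof, and you yourself flag it as the main obstacle. The paper avoids the brute force with a two-step reduction you should adopt or else replace with the actual case analysis: first, if some vertex $v$ has $c(v)\geq 3$, say $\CC(vv_1)=1$, $\CC(vv_2)=2$, $\CC(vv_3)=3$, then rainbow-freeness forces $\CC(v_2v_3)\in\{2,3\}$, say $2$, and then either admissible value of $\CC(v_1v_2)\in\{1,2\}$ closes a $\pi_4$ (on $v_2v_1vv_3$ or on $vv_1v_2v_3$ respectively); hence $c(v)=2$ for every vertex. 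Second, with every vertex seeing exactly two colors, the pigeonhole argument on pairs of color classes from the proof of Theorem \ref{pi2} shows that a third color cannot appear without creating a rainbow triangle. Until the $|N_1(v)|=2$ analysis is actually carried out (or replaced by such a reduction), the upper bound for $k=3$ is not established.
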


\begin{proof}
If $k=3$, we produce a $(K_3,\pi_4,\varphi_3)$ coloring of $K_4$ by the figure below.

\begin{center}
\begin{tikzpicture}[>=stealth]
\tikzstyle{vertex}=[circle,minimum size=5pt,inner sep=0pt]
\tikzstyle{edge} = [draw,thick,-,black]
\tikzstyle{selected edge} = [draw,very thick, -,red!50]
\tikzstyle{dash} = [draw,thick,-,style=dashed,blue]

\node[vertex] (v1) at (0,0) [label= left:$v_1$][circle,draw=black!20,fill=black!]{};
\node[vertex] (v2) at (0,1)[label=left:$v_2$][circle,draw=black!20,fill=black!]{};
\node[vertex] (v3) at (1,1)[label=right:$v_3$][circle,draw=black!20,fill=black!]{};
\node[vertex] (v4) at (1,0)[label=right:$v_4$][circle,draw=black!20,fill=black!]{};	
\draw[edge] (v1) -- (v3) -- (v2) -- (v4) -- (v1);
\draw[dash] (v1) -- (v2);
\draw[selected edge] (v3) -- (v4);

\node at (-.15,.5) {\tiny{$2$}};
\node at (.5,1.15) {\tiny{$1$}};
\node at (1.15,.5) {\tiny{$3$}};
\node at (.5,-.15) {\tiny{$1$}};
\node at (.25,.6) {\tiny{$1$}};
\node at (.75,.6) {\tiny{$1$}};

\end{tikzpicture}
\end{center}

For $n\geq 6$, we apply Lemma \ref{AJ} and find a vertex $v$ such that $\left|N_i(v)\right|\geq \frac{n+1}{3}\geq 3$ for some color $i$. Without loss of generality, suppose $i=1$ and let $u_1,u_2,u_3 \in N_1(v)$. Notice that the triangle on these vertices is in two colors. Say $c(u_1u_2)=c(u_1u_3)=2$ and $c(u_2u_3)=3$. However, the cycle $vu_1u_2u_3$ forms $\pi_4$.

If $k>3$, we show that $M_k(\pi_4)\leq 4$ by the next observations about possible $(K_3,\pi_4,\varphi_3;k)$ colorings of $K_4$.
\begin{enumerate}
	\item No vertex can  be incident to all edges of the same color by the initial argument using Lemma \ref{AJ}.
	\item No vertex can be incident to three edges of different colors as no remaining edge could be colored by a fourth color without forming $\varphi_3$.
	\item Every vertex must be incident to three edges of two colors.
\end{enumerate}

Consider the following coloring

\begin{tikzpicture}[>=stealth]
\tikzstyle{vertex}=[circle,minimum size=5pt,inner sep=0pt]
\tikzstyle{edge} = [draw,thick,-,black]
\tikzstyle{selected edge} = [draw,very thick, -,red!50]
\tikzstyle{dash} = [draw,thick,-,style=dashed,blue]

\node[vertex] (v1) at (0,0) [label= left:$v_1$][circle,draw=black!20,fill=black!]{};
\node[vertex] (v2) at (0,1)[label=left:$v_2$][circle,draw=black!20,fill=black!]{};
\node[vertex] (v3) at (1,1)[label=right:$v_3$][circle,draw=black!20,fill=black!]{};
\node[vertex] (v4) at (1,0)[label=right:$v_4$][circle,draw=black!20,fill=black!]{};	
\draw[edge] (v1) -- (v2);
\draw[dash] (v1) -- (v4) (v1) -- (v3);

\node at (-.15,.5) {\tiny{$1$}};
\node at (.35,.5) {\tiny{$2$}};
\node at (.5,-.15) {\tiny{$2$}};


\node at(6.25,1) {The edges on the vertices $v_2,v_3,v_4$ must be colored};
\node at (5.75,.5) {by two new colors (say $3$ and $4$) since $k>3$.};
\node at (5.90,0) {However, this would mean that the triangle on};
\node at (3.25,-.5) {$v_1,v_2,v_4$, is $\varphi_3$.};

\end{tikzpicture}

Thus, we are left to consider colorings of $K_5$. From the above condition, $(1)$ holds. Furthermore, if $c(v)>2$, say $c(vv_1)=1,$ $c(vv_2)=2,$ and $c(vv_3)=3$, then without loss of generality, let $c(v_2v_3)=2$. Notice that $c(v_1v_2)\in\{1,2\}$ and either choice would produce $\pi_4$. Thus, $c(v)=2$ for all $v\in V$. The rest of the argument is identical to the proof for $M_k(\pi_2)$. 
\end{proof}

\begin{thm}\label{pi5}
$M_k(\pi_5)=k+4$
\end{thm}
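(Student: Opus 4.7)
The plan is to prove $M_k(\pi_5)=k+4$ by an explicit construction giving the lower bound and an induction on $k$ via the Gallai partition of Theorem~\ref{G} for the upper bound.

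For the lower bound, I would construct a $(K_3,\pi_5,\varphi_3;k)$-coloring of $K_{k+3}$ inductively: start with the $2$-coloring of $K_5$ by two complementary $5$-cycles, which is $(K_3,\varphi_3;2)$ and vacuously avoids $\pi_5$; and given a $(K_3,\pi_5,\varphi_3;k-1)$-coloring of $K_{k+2}$, adjoin a new vertex $v$ with all incident edges colored by a new color. Every triangle through $v$ carries two colors, so is neither monochromatic nor rainbow; and $v$ cannot lie on any $\pi_5$ because at each vertex of a $\pi_5$ the two incident $C_4$-edges have distinct colors, while every edge at $v$ has the same new color.

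For the upper bound, suppose $K_n$ with $n=k+4$ is $(K_3,\pi_5,\varphi_3;k)$-colored and apply Theorem~\ref{G} to obtain a Gallai partition $P_1,\dots,P_t$ with between-part colors in $\{a,b\}$. Three facts drive the contradiction. (i)~The reduced $K_t$ is $2$-colored without a monochromatic $K_3$, so $t\le R(3,3)-1=5$. (ii)~If $|P_i|,|P_j|\ge 2$ are joined by color $x$, taking two vertices from each gives a monochromatic $C_4$ in $x$; re-traversing this $K_4$ as a $4$-cycle using one internal edge of each of $P_i,P_j$ yields pattern $(c_i,x,c_j,x)$, which is $\pi_5$ unless $c_i=c_j$. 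Hence all size-$\ge 2$ parts share an internal color $c\ne x$, and no part has size $\ge 3$ (else a monochromatic $K_3$ in color $c$). (iii)~If $|P_i|=2$ and $t\ge 3$, then for any other two parts $P_j,P_k$, the equality $x_{ij}=x_{ik}=x$ forces $x_{jk}\ne x$ (to prevent a monochromatic reduced triangle) and, when $c$ is a third color, makes the $4$-cycle $u_i{-}u_i'{-}u_j{-}u_k{-}u_i$ a $\pi_5$ with pattern $(c,x,x_{jk},x)$; so $x_{ij}\ne x_{ik}$ and $P_i$ has reduced-degree at most $2$, giving $t\le 3$.

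Putting it together: if all parts are singletons then $n=t>5$ contradicts (i); otherwise some part has size $2$ and $t\le 3$. For $t=2$ we must have $|P_2|=1$, and the induction hypothesis (or Theorem~\ref{mixed2} for the base case $k=3$) applied to $P_1$ gives $|P_1|\le k+2$, contradicting $n=k+4$. For $t=3$ all parts have size at most $2$, so $n\le 6<k+4$ for $k\ge 3$. The main obstacle is the residual configuration with only one large part $P_1$ of size $\ge 3$ and $t\ge 3$: here a separate $K_{2,2}$-analysis over pairs of singletons joined to $P_1$ by the same color forces each internal color of $P_1$ to avoid both $a$ and $b$, so $P_1$ uses at most $k-2$ colors and the induction hypothesis $|P_1|\le k+1$ contradicts $|P_1|=k+2$.
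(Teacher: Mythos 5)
Your lower bound is the paper's own construction (two monochromatic $5$-cycles on $K_5$, then one new vertex per additional color), and your reason why the new vertex cannot lie on a copy of $\pi_5$ --- both of its cycle-edges would share the new color, while every vertex of $\pi_5$ sees two distinct colors on its cycle-edges --- is correct and in fact cleaner than the paper's stated justification. Your upper bound takes a genuinely different route: the paper argues locally at one vertex $v_1$, establishing a ``copycat condition'' relating the colors inside each $N_i(v_1)$ to the colors at $v_1$ and then doing a case analysis on the three surplus edges at $v_1$, whereas you induct on $k$ through the Gallai partition. The skeleton is viable, and your facts (i) and (ii) are right, but two steps do not yet close.

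First, (iii) does not give $t\le 3$. You only rule out $x_{ij}=x_{ik}$ ``when $c$ is a third color''; if the internal color $c$ of the size-$2$ part lies in $\{a,b\}$, the cycle $u_iu_i'u_ju_k$ has pattern $(c,x,x_{jk},x)$ with possibly $c=x_{jk}$, which is a $\pi_3$-type configuration and yields nothing. That case is realizable (say $P_i$ internally colored $b$, joined to $P_j$ and $P_k$ by $a$, with $x_{jk}=b$) and needs its own argument: an internal $b$-edge of $P_i$ forces every other part to meet $P_i$ in color $a$, hence the remaining parts are pairwise joined by $b$, and excluding a monochromatic reduced triangle gives $t-1\le 2$. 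Second, the ``residual configuration'' is exactly where the theorem is decided, and your description of it is off. If two singleton parts attach to $P_1$ by the same color, the $K_{2,2}$ analysis forces every internal edge of $P_1$ to equal the other reduced color (the color of the edge joining those two singletons), i.e.\ $P_1$ is monochromatic of order at most $2$ --- a much stronger conclusion than ``internal colors avoid $a$ and $b$.'' If instead the singletons attach by distinct colors (so $t=3$), then $a$ and $b$ are excluded from $[P_1]$ by monochromatic-triangle avoidance alone, with no $\pi_5$ argument needed; what you must then add is that $[P_1]$ therefore uses at most $k-2$ colors (at most $k-1$ when $t=2$), so the induction hypothesis --- which should be phrased for every color count $j<k$, not just $j=k-1$ --- applies to give $|P_1|\le k+1$ (resp.\ $k+2$). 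Each gap is fixable, but as written the argument does not yet carry the main case.
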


\begin{proof}
The lower bound for $k=2$ is attained by coloring a $5$-cycle in $K_5$ by one color and the remaining edges by another color. For every additional color, introduce a new vertex $u$ and color all edges from $u$ to the rest of the graph by a new color. Since such a coloring of $K_n$ contains no independent monochromatic pair of edges, $\pi_5$ never appears and it is easy to verify the same for $K_3$ and $\varphi_3$.

The argument for the upper bound is similar to that for $\pi_3$. Let $G$ be a $(K_3,\pi_5,\varphi_3)$ colored $K_{k+4}$. Choose a vertex $v_1$ and for any color $i$, let $S_i=N_i(v_1)$. We argue that for any color $i$, the colors of edge in $[S_i]$ must be the colors of $\CC(v_1)$. Consider the figure below where $\{v_2,v_3\}\subseteq S_1$ and $\{v_4,v_5\}\subseteq S_2$, and colors $1,2,x$, and $y$ are as labeled. 

\begin{center}

\begin{tikzpicture}[>=stealth,scale=2]
	\tikzstyle{vertex}=[circle,minimum size=5pt,inner sep=0pt]
	\tikzstyle{edge} = [draw,thick,-,black]
	\tikzstyle{red} = [draw,thick, -,red!50]
  \tikzstyle{dash} = [draw,thick,-,style=dashed,blue]
	\node[vertex] (v1) at (0,1) 
[label=above:$v_1$][circle,draw=black!20,fill=black!]{};
	\node[vertex] (v2) at (1,.25)
[label=right:$v_2$][circle,draw=black!20,fill=black!]{};
	\node[vertex] (v3) at (.5,-.5)
[label=below:$v_3$][circle,draw=black!20,fill=black!]{};
	\node[vertex] (v4) at (-.5,-.5)
[label=below:$v_4$][circle,draw=black!20,fill=black!]{};
	\node[vertex] (v5) at (-1,.25)
[label=left:$v_5$][circle,draw=black!20,fill=black!]{};
\draw[edge] (v1) -- (v2)  (v1) -- (v3);
\draw[dash] (v1) -- (v4)  (v1) -- (v5);
\draw[red] (v2) -- (v3) (v4) -- (v5);

\node at (.6,.75)
{$1$};

\node at (-.6,.75)
{$2$};

\node at (-.8,-.25)
{$x$};

\node at (.8,-.25)
{$y$};
\end{tikzpicture}

\end{center}

Notice that $c(v_3v_4)\in \{1,2\}$ and either choice produces $\pi_5$ when $x,y\notin\{1,2\}$. If $x=1$ and $y\neq 2$, then $c(v_3v_4)=1$ produces $\pi_5$ on the edges of $v_1v_2v_3v_4$, so $c(v_3v_4)=2$. If $c(v_2v_5)=2$, then we have $\pi_5$ on the edges of $v_2v_3v_4v_5$, which forces $c(v_2v_5)=1$. However, this leaves us with no choice for a color on $v_3v_5$ because $c(v_3v_5)=2$ produces $\varphi_3$ on the edges of $v_2v_3v_5$ and $c(v_3v_5)=1$ produces $\pi_5$ on the edges of $v_1v_2v_3v_5$.

Thus, we conclude that the colors of edge in $[S_i]$ must be the colors of $\CC(v_1)$. Moreover, for any colors $i,j\in \CC(v_1)$ so that $i$ and $j$ occur at least twice on edges incident to $v_1$, $\CC([S_i])=\{j\}$. We call this the \emph{copycat condition}.

Since there are $k$ colors on the edges of $G$, all of which are represented on the edges incident to $v_1$, there are three edges incident to $v_1$ left to consider. 

If the colors on those three edges are all different, say $\{1,2,3\}$, then the copycat condition cannot hold. 

If the colors contain one repetition, say $\{1,1,2\}$, then $[N_1(v_1)]$ contains a triangle in two colors, one of which is not $2$, breaking the copycat condition.

If the colors on those three edges are all the same, say $\{1,1,1\}$, $[N_1(v_1)]$ contains a triangle in two colors, say $2$ and $3$ where $3$ appears once. Call $v_2$ the vertex where $c(v_1v_2)=2$, and for $i=3,4,5$, call $v_i$ be such that $c(v_1v_i)=1$. Notice $c(v_2v_3)=2$ to avoid $\pi_5$ on the edges of $v_1,v_2,v_3,v_4$, $c(v_2v_4)=2$ to avoid $\pi_5$ on the edges of $v_1v_2v_4v_5$ and to avoid $\varphi_3$ on the edges of $v_2v_3v_4$. However, this leaves no color available for $v_2v_5$.
\end{proof}

\begin{cor}
$M_k(C_4)=k+4$.
\end{cor}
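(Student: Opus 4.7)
The plan is to obtain $M_k(C_4)=k+4$ by combining Theorem \ref{FGJM} (which asserts $GR_k(C_4)=k+4$) for the upper bound with a short construction in the spirit of the lower bound of Theorem \ref{pi5} for the lower bound.

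For the upper bound $M_k(C_4)\leq k+4$, I would note that every $(K_3,C_4,\varphi_3;k)$-colored $K_n$ is in particular a Gallai $k$-coloring of $K_n$ (since it contains no rainbow triangle) that avoids a monochromatic $C_4$. By Theorem \ref{FGJM} this forces $n<GR_k(C_4)=k+4$, so no such coloring exists once $n\geq k+4$.

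For the lower bound $M_k(C_4)\geq k+4$, I would exhibit a $(K_3,C_4,\varphi_3;k)$-colored $K_{k+3}$. Start from $K_5$ whose two color classes are complementary $5$-cycles (a $C_5$ in color $1$ and its complement in $K_5$, which is again a $C_5$, in color $2$); this $2$-coloring contains no monochromatic triangle, no monochromatic $C_4$ (neither $C_5$ contains a $4$-cycle), and no rainbow triangle (only two colors are used). For each remaining color $i=3,\ldots,k$, adjoin a new vertex $u_i$ joined to every previously present vertex by edges of color $i$. Each new color class is a star and therefore contributes no triangle or $C_4$; the earlier color classes are untouched; and any triangle in the resulting graph either lies inside the original $K_5$, or else it contains a highest-indexed new vertex $u_i$, and both edges of the triangle incident to $u_i$ have color $i$, ruling out the rainbow case. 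This yields the required coloring on $5+(k-2)=k+3$ vertices, and no genuine obstacle arises in either direction once Theorem \ref{FGJM} and the above construction are in hand.
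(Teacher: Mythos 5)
Your proposal is correct and follows essentially the same route as the paper: the paper's one-line proof likewise combines the upper bound from Theorem \ref{FGJM} with the lower bound construction of Theorem \ref{pi5}, which is exactly your $K_5$ on two complementary monochromatic $5$-cycles extended by one new vertex per additional color. Your write-up merely makes explicit the verifications the paper leaves implicit.
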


\begin{proof}
We apply the lower bound construction from Theorem \ref{pi5} and the upper bound from Theorem \ref{FGJM}. 
\end{proof}

\begin{obs}\label{pi6}
For any $k\geq 2$, $M_k(\pi_6)=R(K_3,\varphi_3;k)$.
\end{obs}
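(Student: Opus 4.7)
The plan is to prove the equality by showing both inequalities separately, with the key insight being that a rainbow $4$-cycle forces a rainbow triangle on one of its diagonals.

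For the upper bound $M_k(\pi_6) \leq R(K_3,\varphi_3;k)$, I would note that any $k$-coloring of $K_n$ with $n = R(K_3,\varphi_3;k)$ must produce a monochromatic or rainbow $K_3$ by Theorem \ref{mixed2}, which is automatically one of the three forbidden subgraphs $\{K_3,\pi_6,\varphi_3\}$. Hence no $(K_3,\pi_6,\varphi_3;k)$ coloring of $K_n$ exists at that order.

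For the lower bound, I would argue that any coloring of a complete graph that avoids both monochromatic and rainbow triangles automatically avoids $\pi_6$. Suppose for contradiction there is a rainbow $C_4$ on vertices $v_1,v_2,v_3,v_4$ in cyclic order, with $\CC(v_1v_2),\CC(v_2v_3),\CC(v_3v_4),\CC(v_4v_1)$ four distinct colors. Consider the diagonal $v_1v_3$. Looking at triangle $v_1v_2v_3$, to avoid $\varphi_3$ we need $\CC(v_1v_3)\in\{\CC(v_1v_2),\CC(v_2v_3)\}$. Looking at triangle $v_1v_3v_4$, we need $\CC(v_1v_3)\in\{\CC(v_3v_4),\CC(v_4v_1)\}$. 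Since the four cycle colors are all distinct, these two sets are disjoint, a contradiction. Therefore any $(K_3,\varphi_3;k)$-coloring is automatically a $(K_3,\pi_6,\varphi_3;k)$-coloring.

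Now take the extremal $(K_3,\varphi_3;k)$-coloring of $K_{R(K_3,\varphi_3;k)-1}$ guaranteed by the definition of $R(K_3,\varphi_3;k)$; by the preceding paragraph it is also $(K_3,\pi_6,\varphi_3;k)$, so $M_k(\pi_6) \geq R(K_3,\varphi_3;k)$, giving equality. There is no real obstacle here; the whole argument rests on the single observation that a rainbow $C_4$ forces a rainbow triangle on one of its diagonals, which makes $\pi_6$ a ``redundant'' pattern to forbid on top of $K_3$ and $\varphi_3$.
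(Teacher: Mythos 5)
Your proposal is correct and follows essentially the same route as the paper: the lower bound comes from observing that a rainbow $C_4$ forces a rainbow triangle on the diagonal $v_1v_3$ (so every $(K_3,\varphi_3;k)$-coloring is automatically $(K_3,\pi_6,\varphi_3;k)$), and the upper bound comes from the trivial containment of forbidden-pattern sets. You simply spell out the diagonal argument in more detail than the paper's one-line remark.
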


\begin{proof}
Notice that in the figure of $\pi_6$ defined above, any coloring of $v_1v_3$ produces a rainbow triangle. Thus, any coloring avoiding rainbow triangles avoids $\pi_6$, which shows $M_k(\pi_6)\geq R(K_3,\varphi_3;k)$. The upper bound follows from  the fact that any $(K_3, \pi_6, \varphi_3)$-coloring is a $(K_3,\varphi_3)$-coloring.
\end{proof}

\section{Monochromatic Cycles and Stars}

\subsection{Monochromatic Cycles}

In \cite{FM} and \cite{HMOT}, the authors produce the current best bounds for minimum order Gallai colorings forcing monochromatic cycles. In particular, they state that for all integers $k$ and $n$ with $k\geq 1$ and $n\geq 2$,

\begin{align}
(n-1)k+n+1 \leq GR_k(C_{2n}) \leq (n-1)k+3n \label{even monocycle}\\
n2^k+1\ \leq GR_k(C_{2n+1}) \leq (2^{k+3}-3)n\log n \label{odd monocycle}
\end{align}

We solve the problem for odd monochromatic cycles in the mixed case. 

By Theorem \ref{G}, any colored complete graph avoiding rainbow triangles can be partitioned into blocks of vertices where edges between two blocks are all the same color and the number of colors on edges between all blocks is two. Let the \emph{reduced graph} be the induced two-colored graph produced by taking a vertex from each block of such a \emph{Gallai partition}.  

\begin{prop}
For integers $k$ and $n$ with $k\geq 2$ and $n\geq 1$,
\begin{enumerate}
\item If $n>\frac{5^{\frac{k-1}{2}}-1}{2}$, for odd $k$, then $M_k(C_{2n+1})=R(K_3,\varphi_3;k)$\\
\item If $n>\frac{5^{\frac{k}{2}}-1}{2}$, for even $k$, then $M_k(C_{2n+1})=R(K_3,\varphi_3;k)$
\end{enumerate}
\end{prop}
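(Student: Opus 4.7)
The plan is to establish the two inequalities separately. The upper bound $M_k(C_{2n+1})\leq R(K_3,\varphi_3;k)$ is immediate: any $k$-coloring of $K_{R(K_3,\varphi_3;k)}$ already forces either a monochromatic $K_3$ or a rainbow triangle by Theorem \ref{mixed2}, and hence in particular one of the three forbidden configurations $K_3,C_{2n+1},\varphi_3$. So the substantive content is the matching lower bound: under the stated hypothesis on $n$, one must produce a $(K_3,C_{2n+1},\varphi_3;k)$-colored $K_{R(K_3,\varphi_3;k)-1}$.

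For this I would revisit the Chung--Graham extremal construction $G_k$ underlying Theorem \ref{mixed2}, built recursively as follows. Let $G_2$ be the 2-coloring of $K_5$ whose two color classes are $C_5$ and its complement. Obtain $G_{2m}$ by blowing up each vertex of $G_2$ into a disjoint copy of $G_{2m-2}$, keeping the recursive coloring inside each blob and extending the two $G_2$-colors across edges between distinct blobs. Obtain $G_{2m+1}$ by joining two disjoint copies of $G_{2m}$ with all edges in a single fresh color. By the argument of \cite{CG} (or a routine induction checking triangles), each $G_k$ is a $(K_3,\varphi_3;k)$-coloring of $K_{R(K_3,\varphi_3;k)-1}$, so it suffices to rule out a monochromatic $C_{2n+1}$ inside $G_k$.

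The verification then splits by the parity of $k$. When $k=2m$ is even, $G_k$ has only $5^m=5^{k/2}$ vertices and the hypothesis rearranges to $2n+1>5^{k/2}$, so $C_{2n+1}$ cannot fit as a subgraph of $G_k$ at all. When $k=2m+1$ is odd, the hypothesis rearranges to $2n+1>5^m$; here the freshly added $k$-th color of $G_k$ induces the balanced complete bipartite graph $K_{5^m,5^m}$, which contains no odd cycle whatsoever, while every other color class is confined within a single $G_{2m}$-copy spanning only $5^m<2n+1$ vertices. Combining both cases with the trivial upper bound yields $M_k(C_{2n+1})=R(K_3,\varphi_3;k)$. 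The main obstacle in the plan is the odd-$k$ case, where $|G_k|=2\cdot 5^m$ is typically large enough that $C_{2n+1}$ could in principle embed into $G_k$ as a graph; the argument survives because each monochromatic subgraph is forced either to be bipartite (the new color) or to sit inside one small half of size $5^m$ (all older colors).
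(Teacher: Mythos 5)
Your proposal is correct and takes essentially the same route as the paper: the upper bound is the trivial consequence of Theorem \ref{mixed2}, and the lower bound comes from observing that the extremal Chung--Graham mixed coloring of $K_{R(K_3,\varphi_3;k)-1}$ contains no monochromatic odd cycle of length exceeding $5^{\lfloor k/2\rfloor}$ (for even $k$ because the whole graph has only $5^{k/2}$ vertices, for odd $k$ because the top-level color class is bipartite and all other colors are confined to one of the two halves of order $5^{(k-1)/2}$). The paper phrases this verification abstractly through the Gallai partition structure of the extremal coloring, whereas you check it directly on the explicit recursive construction, but the underlying argument is the same.
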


\begin{proof}
We apply (\ref{mixed}) and the proof of that result given in \cite{GSSS}. Notice that every Gallai partition of $K_n$ can have no more than five blocks since the reduced graph is two colored and $R(3,3)=6$. Besides this, every color on edges in a reduced graph on a partition with more than three blocks must be distinct from the colors on edges in the blocks of that graph, to avoid a monochromatic triangle. Thus, every monochromatic cycle can be found on the edges between blocks in a fixed Gallai partition. If $2n+1>\frac{R(K_3,\varphi_3;k)-1}{2}$, for odd $k$, or $2n+1>R(K_3,\varphi_3;k)-1$, for even $k$, then we seek a monochromatic cycle of order greater than that of the blocks in any Gallai partition whose reduced graph contains a monochromatic odd cycle. Such an odd monochromatic cycle does not exist, and hence we are constrained only by the conditions of monochromatic or rainbow triangles. Hence, $M_k(C_{2n+1})=R(K_3,\varphi_3;k)$.
\end{proof}

Next we determine the function in the remaining cases for $n$ and $k$. Define 
\begin{align*}
&m_1=7\\
&m_l = 7 + \sum_{i=1}^{l-1}{(5^i\times 6)}, \mbox{ for } l>1\\
&I_1=\{3, \dots, 3+m_1\}\\
&I_l=\{3+\sum_{i=2}^{l}{(5^{i-2}\times 10)}, \dots, 3+\sum_{i=2}^{l}{(5^{i-2}\times 10)}+m_l\}, \mbox{ for } l>1\\
&i_1=3+m_1\\
&i_l=3+\sum_{i=2}^{l}{(5^{i-2}\times 10)}+m_l, \mbox{ for } l>1
\end{align*}

\begin{thm}
 For all integers $k$ and $n$ with $k\geq 2$ and $n\geq 3$, 
\[ M_k(C_{2n+1}) = 
    \left\{\begin{array}{ll}
    2^k+1, & \mbox{ when } k\geq 2, \mbox{ and } n=2\\
    5^l2^{k-2l}+1, & \mbox{ when } k\geq 2l+2, \\
    & \mbox{ and }n\in I_l \mbox{ for } l \geq 1\\
    5^l2^{k-2l}+2j+1, & \mbox{ when } k\geq 2l+2, 1\leq j \leq \frac{5^l-1}{2}, \\
    & \mbox{ and }n=i_l+j, \mbox{ for } l\geq 1
    \end{array}\right. \]
\end{thm}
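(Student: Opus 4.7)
The plan is to iterate the Gallai decomposition from Theorem \ref{G} and piggyback on the structural argument that underlies Theorem \ref{mixed2}. Any $(K_3,\varphi_3;k)$-colored complete graph admits a Gallai partition whose reduced graph is $2$-colored and triangle-free in each color, so, since $R(3,3)=6$, at most five blocks appear at each level. The two extremal expansion moves are the \emph{doubling}, in which the reduced graph is a single monochromatic edge (consuming one new color and multiplying the vertex count by $2$), and the \emph{pentagonal} expansion, in which the reduced graph is $K_5$ with both color classes realized as the two Hamilton cycles of $K_5$ (consuming two new colors and multiplying the vertex count by $5$). Performing $l$ pentagonal expansions and $k-2l$ doublings builds the Chung--Graham construction on $5^l\,2^{k-2l}$ vertices; the parameter $l$ in the theorem statement indexes how many pentagonal layers can be afforded before a monochromatic $C_{2n+1}$ is inevitable.

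For the upper bound, I would induct on $k$ and take the top-level Gallai partition of a $(K_3,C_{2n+1},\varphi_3;k)$-coloring of $K_N$. If the top reduced graph is a single monochromatic edge, then inter-block edges form a bipartite color class, so any odd monochromatic cycle is confined to one block; the inductive hypothesis for $k-1$ colors bounds block sizes and doubling yields the claim. If the top reduced graph is a pentagon, then a monochromatic $C_{2n+1}$ in one of the two pentagon colors corresponds to a closed walk of length $2n+1$ in $C_5$; writing $a_i$ for the number of cycle-vertices lying in block $B_i$, the walk forces $a_1+\cdots+a_5=2n+1$ together with the pentagon adjacency constraints. Analyzing the non-negative integer solutions of these walk equations pins down, for each residue class of $n$, the smallest admissible $\min_i|B_i|$, which combined with the inductive bound inside each block produces the two formulas $5^l\,2^{k-2l}+1$ and $5^l\,2^{k-2l}+2j+1$ precisely as $n$ ranges over $I_l$ and $\{i_l+1,\dots,i_l+(5^l-1)/2\}$.

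For the lower bound, I would reverse the decomposition: given $n$ in the prescribed range, choose $l$ so that $n\in I_l\cup\{i_l+1,\dots,i_l+(5^l-1)/2\}$, then construct the extremal coloring by performing $l$ pentagonal expansions at the outermost layers, followed by $k-2l$ doublings inside each pentagon block. The block sizes at every layer are tuned so that (i) by induction no block is large enough to host $C_{2n+1}$ internally, and (ii) the outermost walk equations on $C_5$ admit no non-negative integer solution of total weight $2n+1$ compatible with the block-size vector. The quantities $m_l$, $i_l$, and the intervals $I_l$ are exactly the arithmetic output of these walk equations at each pentagonal level, and the recursive definition $m_l=7+\sum_{i=1}^{l-1} 6\cdot 5^i$ records how the admissible cycle-length range grows by a factor of $5$ with each new pentagonal layer.

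The main obstacle will be the combinatorial bookkeeping around the walk equations in $C_5$: one must catalogue exactly which multisets $(a_1,\ldots,a_5)$ of total weight $2n+1$ are realizable as closed walks in $C_5$, verify that the ``gap'' between $n=i_l+(5^l-1)/2$ and the start of $I_{l+1}$ is precisely where a new pentagonal expansion first pays off, and confirm the transition values match on both sides. A secondary nuisance is handling intermediate Gallai partitions with $3$ or $4$ blocks, but as in the proof of Theorem \ref{mixed2} one shows these cases never introduce any new color compared with the reduction to $2$- or $5$-block extremal partitions, so no additional analysis is required beyond the pentagon and doubling cases.
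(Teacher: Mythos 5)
Your outline follows the same route as the paper: iterated Gallai partitions, ``pentagonal'' five-block levels each costing two colors and multiplying the order by $5$, two-block (or four-block) levels supplying the factor $2^{k-2l}$, and the thresholds $m_l$, $I_l$, $i_l$ extracted from which odd lengths occur as monochromatic cycles across the pentagonal levels; your closed-walk-in-$C_5$ bookkeeping is a more systematic version of what the paper verifies by direct enumeration, so the lower-bound half of the plan is sound in spirit. The problem is the upper bound. Your two-block inductive step gives $N\le 2(M_{k-1}(C_{2n+1})-1)$, i.e.\ $M_k\le 2M_{k-1}-1$. For $n\in I_l$ this closes, since $2(5^l2^{k-1-2l}+1)-1=5^l2^{k-2l}+1$; but for $n=i_l+j$ with $j\ge 1$ it yields only $5^l2^{k-2l}+4j+1$, not the claimed $5^l2^{k-2l}+2j+1$. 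This is not merely a lossy estimate that a cleverer argument would tighten: two disjoint copies of an extremal $(K_3,C_{2n+1},\varphi_3;k-1)$-colored $K_{5^l2^{k-1-2l}+2j}$, joined by a $k$-th color, form a $(K_3,C_{2n+1},\varphi_3;k)$-coloring of $K_{5^l2^{k-2l}+4j}$, because the old colors stay inside the copies and the new color class is bipartite and so carries no odd cycle. Hence for $k\ge 2l+3$ the value $5^l2^{k-2l}+2j+1$ cannot be an upper bound if the $(k-1)$-color value is as claimed. The smallest instance is $n=11$, $l=j=1$: a mixed $4$-coloring of $K_{22}$ trivially contains no $C_{23}$, and doubling it gives a valid $5$-coloring of $K_{44}$, whereas the formula asserts $M_5(C_{23})=43$. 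Your scheme therefore cannot establish the third branch; before proceeding you must either locate what the two-block reduction is losing or conclude that this branch of the statement needs correction (the paper's own upper-bound justification is a single sentence and does not confront this).

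A secondary gap is the dismissal of three- and four-block reduced graphs. In a three-block Gallai partition one of the two inter-block colors may legitimately reappear inside a block without creating a monochromatic triangle (the paper notes this exception explicitly), so the number of colors available to a block is not always $k$ minus the number of reduced-graph colors, and the clean color-counting recursion you rely on breaks there. Relatedly, the paper's extremal graphs for $n=i_l+j$ are not pure products of order $5^l2^{k-2l}$: they carry $2j$ additional vertices inserted as new blocks at the outer four- and two-block levels, and it is exactly these unequal block sizes that your walk equations on $C_5$, as described, do not see.
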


\begin{proof}
The lower bound is obtained by maximizing the number of vertices in each Gallai partition of $K_N$ subject to the constraints of avoiding monochromatic triangles and odd monochromatic cycles of size $2n+1$. Let the Gallai partition of $K_N$ be the \emph{level $1$ Gallai partition}. Gallai partitions of the blocks of a \emph{level $i$ Gallai partition} are defined to be the \emph{level $i+1$ Gallai partition}, for $i \geq 1$. Note that, as in the proof of the previous proposition, the number of blocks in any Gallai partition is no greater than five. Furthermore, colors of edges in blocks must be distinct from the colors of edges between blocks, other than in the case of three blocks, in order to avoid monochromatic triangles. Thus, the number of levels of Gallai partitions can be no more than $\left\lceil\frac{k}{2}\right\rceil$ and all monochromatic odd cycles can be found on edges between blocks in fixed levels of Gallai partitions.

We begin by describing the operations of a maximal construction. Fix a level $l$ so that $0<l\leq \left\lfloor\frac{k}{2}\right\rfloor$ and suppose that the Gallai partitions at all levels at most $l$ are ``full", that is, that the order of the graph is $N=5^{l}$. The maximum size of any monochromatic odd cycle is $5^l$, which can be found by traversing the edges between blocks of the level $1$ Gallai partition. Next, notice that further expanding the graph by substituting it into vertices of a colored $K_4$ or $K_3$ without monochromatic or rainbow triangles, requires two new colors for each such substitution, and increases the order of the graph by a factor of four. Similarly, substituting the vertices into a $K_2$ requires one new color and increases the order of the graph by a factor of two. Furthermore, a substitution into $K_4, K_3,$ or $K_2$ does not produce monochromatic odd cycles of size greater than $5^l$. Perform a substitution into $K_4$, $l'$ times, and depending on the parity of $k$ either perform a substitution into $K_2$ one time or not at all, so that 
\begin{align}
l+l'=\frac{k}{2} \mbox{ when } k \mbox{ is even and } l+l'+1=\frac{k+1}{2} \mbox{ when } k \mbox{ is odd}. \label{parts}
\end{align}

Further define the levels of the graph as $-1$ for the first substitution into $K_4$, incrementing by $-1$ for every successive substitution, for levels $-1,\dots, -l'$. If $k$ is odd, add an additional level $-l'-1$ by substituting into $K_2$.

Next, introduce $2j+1$ new vertices for $0\leq j \leq \frac{5^l-1}{2}$ adjacent to all other vertices, and color the edges incident to the new vertices with existing colors without creating $K_3$ or $\varphi_3$. Any new vertex $v$ must belong to a block at some level, say $m$. Since levels $1,\dots, l$ all have five blocks, $v$ cannot belong to a new block on any of these levels. However, if $v$ is in an existing block at each such level, then $v$ is in a block at level $l$, where all blocks are composed of single vertices. This means that, with the inclusion of $v$, the number of blocks on this level is $6$, which is impossible. Since the number of blocks on the negative levels is either four or two, $v$ is in a block of one of the negative levels, which did not exist before the inclusion of $v$. 


We perform the above partitioning while keeping track of the maximum length of any monochromatic odd cycle. Let $K_N$ be partitioned into four blocks at every level. Notice that $N=2^k$ and there are no monochromatic odd cycles of any length. Let $K_N$ be partitioned into five blocks at level $1$, four blocks at level $2$, and further as described in (\ref{parts}). Notice that $N=5\times 2^{k-2}+1$ and there are no odd monochromatic cycles of size greater than $5$. Introducing one new vertex produces monochromatic cycles of sizes $7,9,\dots, 21$. Introducing two additional vertices produces a monochromatic cycles of size $23$ and two more vertices produce a cycle of size $25$.

We repeat this procedure, successively increasing the number of levels where we partition with five blocks, and then filling in the rest with four block substitutions followed by a two block substitution as necessitated in (\ref{parts}). Next we introduce $2j+1$ vertices for $0\leq j \leq \frac{5^l-1}{2}$ as described above. 

If we partition into five blocks up to level $l$, for some positive integer $l$, and substitute into four blocks and two blocks as in (\ref{parts}), and introduce one additional vertex, then we produce monochromatic cycles of sizes $2n+1$ for $n \in I_l$.

If we introduce a total of $2j+1$ additional vertices for $1 \leq j \leq \frac{5^l-1}{2}$, then we produce monochromatic cycles of sizes $2n+1$ for $n=i_l+j$.

The vertex maximality of this construction is implied by Theorem \ref{G} and the fact that the maximum number of blocks in any Gallai partition is five. 
\end{proof}

The mixed Ramsey numbers for even monochromatic cycles seems more difficult, though it is easy to see that the correct order is linear in $k$. Indeed, lexical colorings of complete graphs contain no monochromatic cycles and $M_k(C_{2n}) \leq GR_k(C_{2n})$. Finding the exact value may depend on the answer to the following

\begin{q}
For $1\leq i \leq 5$, let $B_i$ be a simple graph of $b_i$ independent vertices. What is the maximum order of an even cycle on $\left(\vee_{i=1}^{5}{B_i}\right)\vee B_1$?
\end{q}

\subsection{Monochromatic Stars}

The question of the minumum order or a complete Gallai-colored graph forcing a fixed monochromatic star was solved in \cite{GSSS} as Theorem 5. Moreover, due to the simplicity of that argument, the difficulties encountered in the mixed case are surprising.

\begin{q}
For integers $p,k>2$ and $H$ a monochromatic star $K_{1,p}$, determine $M_k(H)$.
\end{q}

Although we cannot answer this question for all $k$, the situation is clear when the number of colors is large enough.

Define
\begin{align*}
g(p)= \left\{     \begin{array}{ll}       \frac{5p}{2}-3, & \text{if }p\text{ is even}\\[1ex]
       \frac{5p-3}{2}, & \text{if }p\text{ is odd}     \end{array}   \right.\\
\end{align*}

\begin{rem}
It has been shown (Theorem $5$ of \cite{GSSS}) that any Gallai-colored complete graph of order at least $g(p)$ must contain a monochromatic $K_{1,p}$.
\end{rem}

\begin{prop}\label{starlargek}
For any integer $p>2$ there exists $K>0$ so that for all $k\geq K$,
$M_k(K_{1,p})=\min\{R(K_3,\varphi_3;k),g(p)\}$
\end{prop}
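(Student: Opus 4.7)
The plan is to establish $M_k(K_{1,p}) \leq \min\{R(K_3,\varphi_3;k),g(p)\}$ for every $k$, and the matching lower bound for $k \geq K$, with $K = K(p)$ chosen below. The upper bound follows from two observations. First, $M_k(K_{1,p}) \leq R(K_3,\varphi_3;k)$ is immediate since every $k$-coloring of $K_{R(K_3,\varphi_3;k)}$ already produces a monochromatic or rainbow $K_3$, one of the forbidden patterns. Second, $M_k(K_{1,p}) \leq g(p)$ because any $k$-coloring of $K_{g(p)}$ avoiding both a monochromatic and a rainbow triangle is necessarily a Gallai coloring, so by the Remark it must contain a monochromatic $K_{1,p}$.

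For the lower bound, I pick $K$ large enough that $R(K_3,\varphi_3;k) \geq g(p)$ for every $k \geq K$; this is possible because $R(K_3,\varphi_3;k)$ grows exponentially in $k$ by Theorem~\ref{mixed2} while $g(p)$ is a constant. With this choice $\min\{R(K_3,\varphi_3;k),g(p)\}=g(p)$, and the task reduces to producing a $(K_3,K_{1,p},\varphi_3;k)$-coloring of $K_{g(p)-1}$. I would use the standard extremal witness to sharpness of the Remark: partition the $g(p)-1$ vertices into five blocks of sizes $\lfloor(p-1)/2\rfloor$ or $\lceil(p-1)/2\rceil$ (chosen to sum to $g(p)-1$), color the edges between blocks using two colors arranged as the two edge-disjoint $C_5$'s of $K_5 = C_5 + C_5$, and fill each block by an iterated Gallai $K_3$-free coloring, introducing fresh colors so that exactly $k$ colors are used in total. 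The verifications would be: no monochromatic $K_3$, because each cross color is a blown-up $C_5$ (triangle-free) and the internal colorings are triangle-free by construction; no monochromatic $K_{1,p}$, because each vertex has at most $2\lfloor(p-1)/2\rfloor \leq p-1$ neighbors in either cross color and each block is too small to host a $K_{1,p}$ internally; and no rainbow $K_3$, because any triangle either lies in a single block or has its three cross edges in at most two of the cross colors given by the $C_5+C_5$ structure.

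The principal obstacle is calibrating the total color count to exactly $k$ without creating a rainbow triangle across block boundaries when additional internal colors are introduced. The key observation that resolves it is that every fresh internal color is confined to a single block, so every triangle with vertices in more than one block uses only the two original cross colors and is therefore automatically rainbow-free. The threshold $K$ is chosen so that the construction genuinely supports exactly $k$ colors for all $k\geq K$ in the intended range, guaranteeing both that the base cross-color count is reached and that the mixed Ramsey number $R(K_3,\varphi_3;k)$ exceeds $g(p)$.
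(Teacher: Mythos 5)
Your proof is essentially the paper's: the same upper bound (the trivial bound by $R(K_3,\varphi_3;k)$ together with the Remark that any Gallai coloring of $K_{g(p)}$ forces a monochromatic $K_{1,p}$), and the same lower-bound construction (five blocks of size roughly $\frac{p-1}{2}$ summing to $g(p)-1$, reduced graph $K_5$ split into two monochromatic $5$-cycles, blocks filled with mixed colorings in fresh colors, and $K$ chosen so that $R(K_3,\varphi_3;k)\geq g(p)$). The one point you flag but do not resolve---arranging for \emph{exactly} $k$ colors on the fixed graph $K_{g(p)-1}$ for all $k\geq K$---is treated just as loosely in the paper, which only conditions on ``$k$ large enough to allow for a mixed coloring in each block.''
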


\begin{proof}
The proof is a mixture of the arguments from Theorem $5$ of \cite{GSSS}, forcing monochromatic stars, and the proof of (\ref{mixed}) from the same paper.

The upper bound follows from the definition of $M_k(K_{1,p})$. Thus, it is enough to construct a coloring on complete graphs of order one less than the upper bound, which avoid monochromatic and rainbow triangles, as well as monochromatic $K_{1,p}$.

We consider the reduced graph of a $(K_3,K_{1,p},\varphi_3;k)$-colored $K_n$ as $K_5$ with two monochromatic cycles of different colors. For odd $p$, let each block of this graph be of order $\frac{p-1}{2}$ and for even $p$, let one block be of order $\frac{p}{2}$ and the other four blocks of order $\frac{p}{2}-1$. If $k$ is large enough to allow for a mixed coloring in each block, and the sum of block sizes does not exceed the pure mixed coloring bounds, we obtain the required coloring. 

\end{proof}

For smaller values of $k$, we believe the bound is related to the function $\Ya(K_3,\varphi_3;k)$.

\section{Colored Four-Cliques}

Mixed Ramsey numbers of colored $K_4$ can be obtained without further work from mixed Ramsey numbers on colored four-cycles in all but a few cases. Indeed, it is easy to see that if we color $K_4$ by first coloring a four- cycle by one of the patterns $\pi_1, \pi_2, \pi_4, \pi_5,$ or $\pi_6$, either the colors of the remaining edges are forced, or it is impossible to avoid monochromatic or rainbow triangles. In either case, the resulting mixed Ramsey number is identical to that for the inital colored cycle. 

The above observation leaves us to consider two cases of $K_4$ with a cycle colored as $\pi_3$. We show these remaining colorings below.

\begin{center}
\begin{tikzpicture}[>=stealth]
\tikzstyle{vertex}=[circle,minimum size=5pt,inner sep=0pt]
\tikzstyle{edge} = [draw,thick,-,black]
\tikzstyle{selected edge} = [draw,thick, -,red!50]
\tikzstyle{dash} = [draw,thick,-,style=dashed,blue]

\node[vertex] (u1) at (0,0) [label= left:$v_1$][circle,draw=black!20,fill=black!]{};
\node[vertex] (u2) at (0,1)[label=left:$v_2$][circle,draw=black!20,fill=black!]{};
\node[vertex] (u3) at (1,1)[label=right:$v_3$][circle,draw=black!20,fill=black!]{};
\node[vertex] (u4) at (1,0)[label=right:$v_4$][circle,draw=black!20,fill=black!]{};	
\draw[edge] (u1) -- (u2)  (u3) -- (u4) (u1) -- (u3) (u2) -- (u4);
\draw[dash] (u2) -- (u3)  (u4) -- (u1);

\node at (-.15,.5) {\tiny{$1$}};
\node at (.5,1.15) {\tiny{$2$}};
\node at (1.15,.5) {\tiny{$1$}};
\node at (.5,-.15) {\tiny{$2$}};
\node at (.15,.6){\tiny{$1$}};
\node at (.85,.6){\tiny{$1$}};

\node at (.5,-.75) {$A$};

\node[vertex] (u1) at (3,0) [label= left:$v_1$][circle,draw=black!20,fill=black!]{};
\node[vertex] (u2) at (3,1)[label=left:$v_2$][circle,draw=black!20,fill=black!]{};
\node[vertex] (u3) at (4,1)[label=right:$v_3$][circle,draw=black!20,fill=black!]{};
\node[vertex] (u4) at (4,0)[label=right:$v_4$][circle,draw=black!20,fill=black!]{};	
\draw[edge] (u1) -- (u2)  (u3) -- (u4) (u2) -- (u4);
\draw[dash] (u2) -- (u3)  (u4) -- (u1) (u1) -- (u3);

\node at (2.85,.5) {\tiny{$1$}};
\node at (3.5,1.15) {\tiny{$2$}};
\node at (4.15,.5) {\tiny{$1$}};
\node at (3.5,-.15) {\tiny{$2$}};
\node at (3.15,.6){\tiny{$1$}};
\node at (3.85,.6){\tiny{$2$}};

\node at (3.5,-.75) {$B$};




\end{tikzpicture}
\end{center}

\begin{thm}\label{A} 
For all integers $k\geq 2$, 
\[ M_k(A) = 
    \left\{\begin{array}{ll}
\frac{5}{3}k+2, & \mbox{ if } k \equiv 0 \pmod 3 \\[1ex]
\frac{5}{3}(k-1)+3, & \mbox{ if } k \equiv 1 \pmod 3\\[1ex]
\frac{5}{3}(k+1)+1, & \mbox{ if } k \equiv 2 \pmod 3\\
    \end{array}\right. \]
\end{thm}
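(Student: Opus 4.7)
The plan is to apply Theorem \ref{G} (the Gallai structure theorem) and exploit the observation that two blocks sharing an ``inner'' edge color immediately produce a copy of $A$.

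For the upper bound, let $G$ be a $(K_3, \varphi_3; k)$-colored $K_n$ and apply Theorem \ref{G} to obtain a Gallai partition $B_1, \ldots, B_t$ whose reduced graph is 2-colored in colors $\alpha, \beta$. Since no monochromatic triangle is permitted, the reduced $K_t$ is a 2-coloring without monochromatic $K_3$, forcing $t \leq 5$ by $R(3,3) = 6$. I would then establish two structural constraints. First, any in-block color must differ from both $\alpha$ and $\beta$, since otherwise a monochromatic triangle is produced with any vertex in an adjacent block (every block has a neighbor in each of the two reduced colors whenever $t \geq 3$). Second, no color $\gamma \notin \{\alpha,\beta\}$ can appear inside two distinct blocks $B_i, B_j$: if $uw \in B_i$ and $xy \in B_j$ are both colored $\gamma$, then the four inter-block edges $\{ux, uy, wx, wy\}$ form a monochromatic $K_{2,2}$ (hence a $C_4$) in color $c(B_i,B_j)$, and $\{uw, xy\}$ is a $\gamma$-colored matching, so $\{u,w,x,y\}$ carries a copy of $A$.

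Letting $c_i$ be the number of distinct inner colors used by $B_i$, these constraints give $\sum_i c_i = k-2$ and $|B_i| \leq M_{c_i}(A) - 1$, hence the recursive bound
\[
n \;\leq\; \sum_{i=1}^{t} \bigl(M_{c_i}(A) - 1\bigr), \qquad t \leq 5, \ \sum_i c_i = k-2.
\]
For the lower bound I would construct a coloring saturating this recursion: take a Gallai 5-partition, color the reduced $K_5$ as two complementary $C_5$'s (the essentially unique $(K_3,\varphi_3;2)$-coloring of $K_5$), pick block sizes attaining the optimum in $(c_1,\ldots,c_5)$, and fill each block with an extremal $(K_3, A, \varphi_3; c_i)$-coloring on a palette pairwise disjoint from the others and from $\{\alpha,\beta\}$. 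The two structural observations guarantee that no copy of $A$ is created between blocks, and induction handles the inside of each block.

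The main obstacle will be the integer optimization
\[
f(k) \;=\; \max_{\,t \leq 5,\ \sum c_i = k-2\,} \sum_{i=1}^{t} f(c_i), \qquad f(c) = M_c(A) - 1,
\]
together with verification of the base case $M_2(A)=6$ (which follows from the fact that every $K_4 \subset K_5$ in the two-$C_5$ coloring is a copy of $B$ rather than $A$). Matching the three branches of the claimed closed form requires an induction on $k$ that tracks how the residue of $k-2$ modulo $3$ distributes across the five block slots, and a check that spreading the inner-color budget across several moderate-sized blocks dominates concentrating it into one large block. This modular bookkeeping, rather than any single combinatorial step, is where the bulk of the proof's complexity will lie.
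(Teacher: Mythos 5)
Your setup---Gallai partition into $t\le 5$ blocks, in-block palettes pairwise disjoint and disjoint from the reduced colors, hence $n\le\sum_{i=1}^{t}\bigl(M_{c_i}(A)-1\bigr)$ with $\sum_i c_i=k-2$---is the same key observation the paper uses (its entire upper-bound argument is the remark that two blocks sharing an interior color force a copy of $A$, followed by an assertion that its chain-like construction is therefore optimal). The genuine gap is in the step you dismiss as modular bookkeeping: the optimization you pose does \emph{not} reproduce the claimed closed form, and the discrepancy is not a matter of organizing an induction. Already for $k=4$ your recursion admits $t=5$ with $(c_1,\dots,c_5)=(2,0,0,0,0)$, giving $f(2)+4f(0)=5+4=9$, against the theorem's claim $M_4(A)=8$. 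Worse, this distribution is realizable: take one block equal to a $K_5$ colored by two monochromatic pentagons in colors $1,2$, four singleton blocks, and color the reduced $K_5$ by two monochromatic pentagons in colors $3,4$. This is a $4$-coloring of $K_9$ with no monochromatic or rainbow triangle, and it contains no copy of $A$: the interior colors induce $C_5$'s (which contain no $C_4$), and every monochromatic $C_4$ in a reduced color is either of the form $u,x,w,y$ with $u,w$ in one block and $x,y$ in another (its diagonals lie in two disjoint in-block palettes) or of the backtracking form $u\in B_a$, $x,y\in B_b$, $w\in B_c$ with $B_a,B_c$ the two pentagon-neighbors of $B_b$ (one diagonal receives the \emph{other} reduced color, the other an in-block color). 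In no case do the two diagonals share a color, so $A$ never occurs and $M_4(A)\ge 10$. The same device scales: $(c_i)=(2,2,2,0,0)$ yields a $17$-vertex $(K_3,A,\varphi_3;8)$-coloring against the claimed $M_8(A)=16$, and five full $K_5$ blocks give $25$ vertices with $12$ colors against the claimed $M_{12}(A)=22$.

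So the obstacle is not the ``check that spreading the budget across several moderate blocks dominates concentrating it'': spreading \emph{does} dominate, precisely because a five-block reduced pentagon costs only two colors while contributing up to five size-$5$ blocks, and nothing in your two structural constraints (nor in the paper's) excludes this. To complete a proof of the stated formula one would need a further structural reason forbidding four- and five-block reduced graphs with large blocks, and no such reason exists---the colorings above are legitimate. Carried to completion, your approach refutes the closed form of Theorem \ref{A} rather than proving it; the statement appears to be incorrect as it stands, and the paper's own one-line optimality claim is where its argument breaks down. (A secondary, minor point: your claim that every block sees both reduced colors fails for $t=3$, so $\sum_i c_i=k-2$ should be an inequality; but this is immaterial next to the main issue.)
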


\begin{proof}
For the lower bound we maximize the number of vertices in each Gallai partition while avoiding the pattern $A$. We describe the first two levels of the Gallai partitions and generalize the construction. For $k=2$, color $K_5$ with two monochromatic five-cycles for a level $1$ Gallai partition and call the colored graph $B_1$. For $k=3$, let $B_2$ be the graph consisting of one vertex. Consider two blocks, $B_1$ and $B_2$, with edges between them colored by the third color not previously used. For $k=4$, let $B_2'$ be the graph consisting of two vertices adjacent by an edge colored by a third color not used in $B_1$ and the edges between $B_1$ and $B_2'$ be colored by a fourth color not previously used. For $k=5$, let $B_2''$ be a $K_5$ with two monochromatic five-cycles colored by a third and fourth color not previously used. Color the edges between $B_1$ and $B_2''$ by a fifth new color. Continue the construction in this way, for $B_3, B_3',$ and $B_3''$, so that the reduced graph on three blocks is a two-colored triangle. Further, the reduced graph on four blocks is $B$ and the reduced graph on five blocks is $K_5$ with two monochromatic five-cycles.

This defines two levels of a Gallai partition. For every next level, we repeat the previous constructions.

To show the upper bound, observe that if two blocks in a Gallai partition have edges of the same color, then $A$ must exist between them. Thus, every block must have edges of distinct colors in order to avoid $A$. Therefore, the above construction provides the maximum number of vertices for any given $k$.  

\end{proof}

\begin{prop}\label{B}
For all integers $k\geq 2$,
$M_k(B)= M_k(\pi_1)=2^k+1$
\end{prop}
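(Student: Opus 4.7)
The plan is to prove the equality $M_k(B)=M_k(\pi_1)$ directly, by arguing that in the presence of the ban on monochromatic and rainbow triangles, a coloring contains $\pi_1$ as a subgraph \emph{if and only if} it contains $B$; this reduces the problem to Corollary \ref{pi1}.

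For the lower bound $M_k(B)\geq 2^k+1$, the key observation is that $\pi_1$ sits inside $B$ as a sub-coloring. Indeed, in the figure defining $B$, the $4$-cycle along the vertices $v_1,v_2,v_4,v_3$ (in that cyclic order) uses the edges $v_1v_2$, $v_2v_4$, $v_4v_3$ of color $1$ together with the edge $v_3v_1$ of color $2$, which is exactly the pattern $\pi_1$. Hence any coloring that contains $B$ automatically contains $\pi_1$, so every $(K_3,\pi_1,\varphi_3;k)$-coloring is in particular a $(K_3,B,\varphi_3;k)$-coloring, and the construction of Theorem \ref{lbpi1} gives a $(K_3,B,\varphi_3;k)$-colored $K_{2^k}$.

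For the upper bound, I would show the converse: any $(K_3,\varphi_3)$-coloring that contains $\pi_1$ must in fact contain $B$ on those same four vertices. Suppose $v_1,v_2,v_3,v_4$ carry a copy of $\pi_1$ with $\CC(v_1v_2)=\CC(v_2v_3)=\CC(v_3v_4)=1$ and $\CC(v_1v_4)=2$, and consider the two diagonals. The triangle $v_1v_2v_3$ already has two edges of color $1$, so forbidding a monochromatic triangle forces $\CC(v_1v_3)\neq 1$; symmetrically $\CC(v_2v_4)\neq 1$. Next, the triangle $v_1v_2v_4$ has colors $1,?,2$, so forbidding a rainbow triangle forces $\CC(v_2v_4)\in\{1,2\}$, and combined with the previous step, $\CC(v_2v_4)=2$; the analogous argument on $v_1v_3v_4$ gives $\CC(v_1v_3)=2$. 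The induced colored $K_4$ on $\{v_1,v_2,v_3,v_4\}$ now has three edges of color $1$ forming a Hamiltonian path and three edges of color $2$ forming the complementary Hamiltonian path, a decomposition which is unique up to relabeling of vertices and which realizes exactly the pattern $B$.

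Combining the two directions, a coloring is $(K_3,B,\varphi_3;k)$ precisely when it is $(K_3,\pi_1,\varphi_3;k)$, so $M_k(B)=M_k(\pi_1)=2^k+1$ by Corollary \ref{pi1}. No real obstacle is anticipated; the only care needed is in the short case analysis that forces the two diagonals to both be color $2$, and in checking that the resulting $2$-coloring of $K_4$ is indeed isomorphic to $B$ (which follows because both colorings partition $E(K_4)$ into two edge-disjoint copies of $P_4$, and this decomposition is unique up to vertex relabeling).
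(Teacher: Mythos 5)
Your proposal is correct and is essentially the paper's own argument, just written out in more detail: the paper likewise notes that the two diagonals of a $\pi_1$-colored cycle are forced to the minority color (yielding $B$) and that, conversely, $B$ contains $\pi_1$, so the classes of $(K_3,B,\varphi_3;k)$- and $(K_3,\pi_1,\varphi_3;k)$-colorings coincide. Your explicit triangle-by-triangle verification of the forced diagonal colors and the uniqueness of the two-Hamiltonian-path decomposition of $K_4$ fills in exactly the details the paper leaves as ``easy to see.''
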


\begin{proof}
It is easy to see that given a $\pi_1$-colored cycle, to avoid $K_3$ and $\varphi_3$, the remaining edges must be colored by the minority color of the cycle. The resulting graph is isomorphic to $B$. Thus, any mixed coloring avoiding $B$ must also avoid $\pi_1$, and the reverse statement holds as well.
\end{proof}

\section{Questions About General Bounds}

Based on some general arguments using Gallai's theorem and empirical evidence from our results, we state a few open questions.

\begin{enumerate}
\item Determine colorings $f$ of $C_n$ so that $M_k(f(C_n))$ is a decreasing piecewise constant function, linear, or exponential in $k$.\\
\item Show that for any coloring $f$ of $C_n$, $M_k(f(C_n))$ can only be a decreasing piecewise constant function, linear, or exponential in $k$.\\
\item If a coloring of a graph $H$ does not contain a "large" monochromatic star, show that $M_k(H)$ is either linear or exponential in $k$ for large enough $k$.
\end{enumerate}

\end{document}